\theoremstyle{thmstyleone}%
\newtheorem{theorem}{Theorem}
\newtheorem{proposition}[theorem]{Proposition}%
\theoremstyle{thmstyletwo}%
\newtheorem{remark}{Remark}%
\newtheorem{corollary}{Corollary}%
\newtheorem{assumptions}{Assumptions}%
\theoremstyle{thmstylethree}%
\begin{document}

\title[A cyclic random motion in $\mathbb{R}^3$ driven by geometric counting processes]{A cyclic random motion in $\mathbb{R}^3$ driven by geometric counting processes}


\author*[1]{\fnm{Antonella} \sur{Iuliano}}\email{antonella.iuliano@unibas.it}
\equalcont{These authors contributed equally to this work.}

\author[1]{\fnm{Gabriella} \sur{Verasani}}\email{gabriella.verasani@unibas.it}
\equalcont{These authors contributed equally to this work.}

%
\affil*[1]{\orgdiv{Department of Mathematics, Computer Science and Economics,}, \orgname{University of Basilicata}, \orgaddress{\street{Viale dell'Ateneo Lucano, 10}, \city{Potenza}, \postcode{85100}, \state{PZ}, \country{Italy}}}
%
%


\abstract{
We consider the random motion of a particle that moves with constant velocity in $\mathbb{R}^3$. 
The particle can move along four directions with different speeds that are attained cyclically. It follows that the support of the stochastic process describing the particle's position at time $t$ is a tetrahedron.
We assume that the sequence of sojourn times along each direction follows a geometric counting process (GCP). When the initial velocity is fixed, we obtain the explicit form of the probability law of the process $\boldsymbol{X}(t) = (X_1(t);X_2(t);X_3(t))$, $t > 0$, for the particle's position. 
We also investigate the limiting behavior of the related probability density when the intensities of the four GCPs tend to infinity.
Furthermore, we show that the process does not admit a stationary density.
Finally, we introduce the first-passage-time problem for the first component of $\boldsymbol{X}(t)$ through a constant positive boundary $\beta > 0$ providing the bases for future developments.
%
}

\keywords{Counting process, Finite speed, First-passage time, Random motion, Random evolution}


\pacs[MSC Classification]{60K99, 60K50}
\pacs[ORCID]{Antonella Iuliano: Orcid 0000-0001-8541-8120 \\
	                        Gabriella Verasani: Orcid 0009-0000-4994-1694
}

\maketitle

\section{Introduction}\label{sec1}

In the last decades, finite-velocity random motions have been widely studied  as a natural class of stochastic processes to model real phenomena on multi-dimensional Euclidean spaces.  In the one-dimensional case, the model that describe these motions is the telegraph process, in which the changes of the two possible velocities are governed by the Poisson process. This process has been studied in \citep{orsingher1990probability}, \cite{kolesnik1998equations}, \cite{beghin2001probabilistic} and  \cite{kolesnik2023book}. Later, the study of random motions in two or more dimensions has been performed by many authors over the years. In particular,  finite-velocity planar random processes in continuous time have been investigated by \citep{orsingher1986planar} and \cite{masoliver1993some}.
Further investigations have been oriented to investigate the distribution for Markovian random motion in the plane (c.f. \cite{kolesnik1998equation} and  \cite{kolesnik2006discontinuous}), and  the random motion with possible reflections at Poisson-paced events (c.f. \cite{kolesnik2002analysis}). The analysis of these random motions with finite speed is performed analytically by solving partial differential equations. Other methods use probabilistic approaches based on order statistics, or on more general renewal processes as in \cite{di2002exact} (with arbitrary random steps between successive switches). 
\par
Several generalizations of the basic model have been proposed especially in biology and physics, motivated by the need of describing a variety of random motions performed by cells, micro-organisms and animals. These problems have been analyzed in \cite{martens2012probability}, \cite{hartmann2020convex}, \cite{santra2020run} and  \cite{pogorui2021distribution}.
\par 
The main aim for the study of finite-velocity random motions in one or more dimensions is the determination of the probability distribution of the position vector $\boldsymbol{X}(t)$ of a particle at time $t$. For instance, cyclic planar motions with three and four directions have been treated by \cite{orsingher2002bessel} and \cite{orsingher2004cyclic} in which the changes of direction are governed by a homogenous Poisson process. Moreover, the authors have found a connection between the equations governing the probability distributions and the Bessel functions of higher order. The probability law of the motion of a particle performing a cyclic random motion is determined in \cite{lachal2006cyclic} where the particle can take a finite number of  possible directions with different speeds. Here, the changes of direction occur at Poisson random times. For instance, see also \cite{lachal2006minimal} in which the probability distribution is obtained by using order statistics and is expressed in terms of hyper-Bessel functions of higher order. Recently, planar random motions with orthogonal directions switching at Poisson time have been examined in \cite{orsingher2019cyclic},  \cite{cinque2022random} and \cite{cinque2023stochastic}. Other remarkable results concerning random motions in high dimensional spaces with uniformly distributed directions are present in \cite{orsingher2007random}, \cite{pogorui2012evolution}, and \cite{pogorui2011isotropic}.
\par
Some modified versions of the telegraph process have been considered also by substituting the underlying Poisson process with different kinds of counting processes. For instance, in \cite{di2023some} the authors study a stochastic process
which describes the dynamics of a particle performing a finite-velocity random motion  in $\mathbb{R}$ and $\mathbb{R}^2$, whose velocities alternate cyclically along two and three different directions, respectively, with
possibly unequal velocities. The novelty of this last paper is that the number of 
displacements of the motion along each possible direction follows a Geometric Counting Process (GCP) (see \cite{cha2013note}). This new approach has the advantageous characteristic that consists in the possibility of describing 
phenomena whose interarrival times have heavy tails rather than the memoryless property, as observed often in real cases. 
Specifically, the properties of the interarrival times of  GCP's are declined in several areas, such as in software reliability or actuarial theory, where such counting processes are often considered to describe occurrences of shocks or claims, and in earth sciences (climatology, hydrology, etc) to model the failures along times. For instance, some examples of applied fields where GCPs can be used are discussed in Section 6 and 7 of  \cite{di2019some}.

\par 
Therefore, starting from the  results obtained in \cite{di2023some}, here, we propose an extension of  the classical telegraph process in $\mathbb{R}^3$ with the aim of  (i) determining  the explicit general laws of the distribution of the current position, and (ii) presenting an approach of resolution based on the study of the intertimes between consecutive changes of  speed. In particular, we assume that the motion proceeds along four velocities that alternate cyclically, where the intertimes between two subsequent velocity changes toward each direction are distributed as a GCP. The use of a limited number of speeds is justified by the results presented in \cite{orsingher2004cyclic} where the authors consider a cyclic random motion with four directions forming a regular tetrahedron in $\mathbb{R}^3$ but where the changes of speed are governed by a homogenous Poisson process.
Precisely, in our case the intertimes possess a heavy-tailed distribution, differently to the classical telegraph process in which the times between velocity changes have exponential distribution. The scheme of the present paper is useful in describing suitable dynamics where random occurrences between two events have infinite expectations, and are dependent. Thus, the proposed stochastic process provides new models for the description of phenomena that are no more governed by hyperbolic PDE's as the classical telegraph equation. Another benefit of the present study is the construction of new solvable models, whose probability laws are obtained in tractable and closed form (as in the special case with symmetry) using  an approach based on the analysis of the intertimes between consecutive velocity changes. Note that the introduction of these finite-velocity random motions is also useful to describe the movement of a particle that chooses the new direction among all the possible ones in a cyclical manner.

\par
Moreover, a further strength of this work concerns some interesting asymptotic results.  In particular, we are able to obtain the limiting density of the process in a closed form when the parameters of the intertimes between velocity changes tend to infinity. In addition, we show that the process does not admit a stationary density as $t$ goes to $+\infty$. All these motivations allow us to study the motion in  $\mathbb{R}^3$ because it can be applied to concrete situations and the amount of calculations needed is comparatively acceptable. For instance, we mention that these models can approximate the random motion of some physical models linked to the movement of particles in the environment, birds and animals in the ocean, flying objects or users in a shopping mall.
\par
In detail, we analyze the process $\{(\boldsymbol{X}(t),V(t)), t \geq 0 \}$ in  $\mathbb{R}^3$ which describes the position of randomly moving particle performing a cyclic alternating motion with four specific and possible different directions $\vec{v}_j$, for $1 \leq j \leq4$. The velocity-vectors form a (possibly irregular)  tetrahedron, say $\mathcal{T}(t)$, i.e., the set of all possible positions of the moving particle at time $t >0$ on the surface of the support  in $\mathbb{R}^3$. 
Note that the analysis of random motions in multidimensional Euclidean spaces is quite rare in the literature, since its analysis is rather difficult. Hence, to overcome the difficulties of the study we will refer in detail to the simpler case in which the region ${\cal T}(t)$ is regular, centered in the origin and with vertices placed on the coordinate axes.
Once defined the geometry of the region $\mathcal{T}(t )$, the probability law of the process is determined when (i) the initial components are given by three terms, describing the situations in which the particle is found on the vertices, edges and faces of $\mathcal{T}(t )$, at the beginning of its motion, and (ii) the density concerning the absolutely continuous part is related to the motion of the particle in the interior of $\mathcal{T}(t )$. In particular, for the latter absolutely continuous component we exhibit an integral representation involving the probability density functions and the conditional survival function of the intertimes  between two successive events.
\par
This is the plan of the paper. In Section \ref{sec2} we introduce the process $\{(\boldsymbol{X}(t),V(t)), t \geq 0\}$ describing the position of a particle performing a cyclic, minimal, random motion in $\mathbb{R}^3$ with four possible directions $\vec{v}_j$, for $1 \leq j \leq 4$, and constant velocity $c>0$. Some preliminary results on the GCP are also briefly illustrated with reference to the distribution of the intertimes between two consecutive events. Then, in Section \ref{sec4}, we study the directional vectors and the analytic representation of the support in $\mathbb{R}^3$ identifying the tetrahedron $\mathcal{T}(t)$. In Section \ref{sec5} we investigate the stochastic process and its probability laws with underlying GCP, and determine the explicit expression of the initial and absolutely continuous components.  Section \ref{sec6} illustrates a special case with four fixed cyclic directions forming a regular tetrahedron in $\mathbb{R}^3$. Moreover, we examine the limiting distribution of the process when the parameters of the intertimes tend to infinity and when the time $t$ goes to infinity. Finally, in Section \ref{sec7} we provide some basic lines for the study of  the first-passage-time problem of the first component of the process through a constant positive boundary $\beta >0$. 

\section{A random motion in $\mathbb{R}^3$ with cyclic velocities}\label{sec2}

Let $\boldsymbol{X}(t) = (X_1(t), X_2(t), X_3(t))$ and $V(t)$ be respectively the position and the velocity of the particle at an arbitrary time $t \geq 0$ in the space $\mathbb{R}^3$. Assume that each point in $\mathbb{R}^3$ is represented by a triple $\boldsymbol{x}=(x_1,x_2, x_3)$. We consider a cyclic random motion $\{(\boldsymbol{X}(t),V(t)), t \geq 0 \}$ performed by a particle which can take four possible directions $\vec{v}_j$, for $1 \leq j \leq4$, and moves with constant velocity $c > 0$. The motion is cyclic in the sense that at each event the particle switches from direction $\vec{v}_j$ to $\vec{v}_{j+1}$, for $j=1,2,3$, and then from $\vec{v}_4$ to $\vec{v}_1$, and so on. Let $D_{j,k}$ be the random duration of the $k$-th time interval during which the motion proceeds with velocity $c$.
For any $1 \leq j \leq4$, the set $D_{j,\cdot}:=\{D_{j,k}, k \in \mathbb{N}\}$ constitutes a sequence of non-negative and dependent absolutely continuous random variables such that
\begin{equation}
	D_j^{(0)}=0, \quad D_j^{(k)}=\sum_{i=1}^k D_{j,i}, \quad k \in \mathbb{N}.
	\label{eq1}
\end{equation}
Moreover, the sets $D_{j,\cdot}$, for $1 \leq j\leq 4$, are mutually independent.  
\par
Now, let $\{ N(t), t \geq 0 \}$ be the alternating counting process having arrival times $T_1 , T_2 , \ldots$ (i.e., the instants when the events occur), such that $N(t)$ counts the total number of velocity reversals of the particle in $[0, t ]$, i.e.
\begin{equation}
	N(0)=0, \quad N(t)=\sum_{k=0}^{\infty}{\bf 1}_{\{T_k \leq t\}}, \quad t \geq 0.
	\label{eq2}
\end{equation}
In order to obtain the probability law of the stochastic process $\{(\boldsymbol{X}(t),V(t)), t \geq 0\}$ introduced so far,  we consider the stochastic equations for the position $\boldsymbol{X}(t)$ and the velocity $V(t)$ of the particle at time $t\geq 0$. Specifically, we have
\begin{equation*}
	V(t)=\sum_{k=0}^{\infty}{\bf 1}_{\{T_{k-1} \leq t < T_k\}} \vec{v}_k,
\end{equation*}
with $V (0) \in \{\vec{v}_1,\vec{v}_2,\vec{v}_3,\vec{v}_4\}$, and
\begin{equation}
	\boldsymbol{X}(t)=\boldsymbol{X}(0)+\int_{0}^{t} V(s) ds=\sum_{k=1}^{N(t)-1}\big(T_k-T_{k-1}\big)\vec{v}_k+\big(t-T_{N(t)-1}\big)\vec{v}_{N(t)}, 
	\label{eq4}
\end{equation}
with $\boldsymbol{X}(0)=\boldsymbol{0} \equiv (0,0,0)$ and $\sum_{k=1}^{N(t)-1}(T_k-T_{k-1})=T_{N(t)-1}$, since $T_0=0$. The sum in Eq.\,(\ref{eq4}) refers to the case where at least one change of direction has occurred before $t$, while the last term is related to the displacement along the current direction at time $t$. 
\par
We indicate with $T_{k}$ the $k$-th random instant in which the motion modifies its direction, for $n \in \mathbb{N}_{0}$. Remembering Eq.\,(\ref{eq1}) the following identity holds:
\begin{equation} 
	T_{4n+j}=\sum_{r=1}^{4}D_r^{(n+m_{j,r})}, 
	\label{eq5}
\end{equation}
where, for fixed $j \in \{1,2,3,4\}$, $M=\big(m_{j,r}\big)_{1\leq r \leq 4} \in \mathbb{R}^{4 \times 4}$ is equal to
$$
M=\begin{pmatrix}
	1 & 0 & 0 &0\\
	1 & 1 & 0 &0\\
	1 & 1 & 1 &0\\
	1 & 1 & 1 &1
\end{pmatrix}.
$$

The relations obtained above will be used in Section \ref{sec5} to study the probability law of the process $\{(\boldsymbol{X}(t),V(t)), t \geq 0 \}$.

\subsection{The distribution of the intertimes as a Geometric Counting Process}\label{sec3}

We introduce some preliminary results on the GCP (for more details, see \cite{cha2013note} and \cite{di2019some}).  Specifically, we assume that the random intertimes between consecutive changes of directions are governed by possible different GCPs (see, for instance, \cite{di2023some}). 
\par
For this purpose, we consider a mixed Poisson process $\{N(t), t \geq 0\}$ characterized by the following marginal distribution expressed as a mixture:
\begin{equation*}
	\mathbb{P}[N(t)=k]= \int_{0}^{t} \mathbb{P}[N^{(\alpha)}(t)=k]dU(\alpha),\quad  t \geq 0, \quad k \in \mathbb{N}_{0},
\end{equation*}
where $\{N^{(\alpha)}(t)$, $ t \geq 0\}$  is a Poisson process with intensity $\alpha$ and $U$ is an exponential distribution with support $\mathbb{R}^+$ Here, we consider the special case when $U(\cdot) = U_{\lambda}(\cdot)$ is an exponential distribution with mean $\lambda \in \mathbb{R}^{+}$. Therefore, we refer to the  process $N(t)$ as a GCP with intensity $\lambda$, according to \cite{cha2013note}, where the authors studied dependence properties of its increments in the general case of non-constant intensities. The probability distribution of  the process $N(t)$ satisfies the following properties:
\begin{itemize}
	\item at time $t = 0$, one has  $N(0) = 0$
	\item for all $s,t \geq 0$ and $k \in \mathbb{N}_{0}$ it holds:
	\begin{equation*}
		\mathbb{P}\{N(t+s)-N(t)=k\}= \dfrac{1}{1+\lambda s} \bigg( \dfrac{\lambda s}{1+\lambda s}\bigg)^{k}.
	\end{equation*}
\end{itemize}
Let $T_{k}$, with  $k\in \mathbb{N}$, be the random times denoting the arrival instants of the process $N(t)$ such that $T_{0}=0$. It is interesting to observe that the probability density function (p.d.f.) of $T_k$ is expressed as follows:
\begin{equation*}
	f_{T_k}(t)=k\bigg( \dfrac{\lambda t}{1+\lambda t}\bigg)^{k-1}\dfrac{\lambda}{(1+\lambda t)^{2}}, \quad t \geq 0.
\end{equation*}
We define $D_{j,k}=T_{k}-T_{k-1}$, with $1 \leq j \leq 4$ and $k \in \mathbb{N}$, the increments between two consecutive events. Differently from the Poisson process, the GCP does not have the property of independent increments, hence, the random variables $D_{j,k}$ are dependent. 
Moreover, making use of Eq.\,(10) in \cite{di2019some}, we get the marginal density for all intertimes $D_{j,k}$, $1 \leq j \leq 4$ and $k \in \mathbb{N}$, 
\begin{equation}
	f_{D_{j,k}}(t)=\frac{\lambda}{(1+\lambda\, t)^{2}}, \quad t \geq 0.
	\label{eq13}
\end{equation}
We remark that $D_{j,k}$ has  a long right tail (see, for instance, \cite{Asmussen2003}) since, for all $t > 0$,
$$
\lim_{x \to\infty} \mathbb{P}\{D_{j,k}>x+t | D_{j,k}>x\} = 1.
$$
From density (\ref{eq13}) it is not hard to see that $D_{j,k}$ has a modified Pareto distribution which means that the intertimes of the GCP $\{N(t), t >0\}$ have non-finite expectations. In addition, to obtain the explicit expressions of the process $\boldsymbol{X}(t)$, we recall the conditional survival function of $D_{j,k}$ conditional on $T_{k-1}=t$, for $k \in \mathbb{N}$, is given by
\begin{equation}
	\overline{F}_{D_{j,k}\vert T_{k-1}}(s \vert t)=\bigg(\dfrac{1+\lambda t}{1+\lambda(t+s)}\bigg)^{k}, \quad s,t \geq 0, 
	\label{eq14}
\end{equation}
and the corresponding probability density function (p.d.f.) of $D_{j,k}$ conditional on $T_{k-1}=t$, for $k \in \mathbb{N}$, i.e., 
\begin{equation}
	f_{D_{j,k}\vert T_{k-1}}(s \vert t)=\frac{k\lambda (1+\lambda t)^{k}}{[1+\lambda(t+s)]^{k+1}}, \quad s,t \geq 0.
	\label{eq15}
\end{equation}
%

\par
In the following, we determine the initial and absolutely continuous components of the probability law of the stochastic process $\{(\boldsymbol{X}(t),V(t)), t \geq 0\}$, which describes a cyclic alternating random motion along four directions $\vec{v}_j$, for $1 \leq j \leq 4$, driven by independent GCPs with intensity ?$\lambda_j$ ($1 \leq j \leq 4$.
We remark that the intertimes of the GCPs are in practice more realistic since in real cases the counting processes under investigation do not have necessarily the independent increments property. Some applications may be found in geophysics (see \cite{Benson2007}), in climatology (see \cite{Levergnat1998}) or in modeling for internet traffic (see \cite{clegg2010}).  Moreover, in real applications the number of events in a fixed time interval does not always follow the Poisson distribution. Therefore, the class of GCPs is a possible alternative to Poisson processes. At least, the use of GCPs allows us to obtain the probability law of the process $\boldsymbol{X}(t)$ in a tractable way as shown in the remainder of the paper.

\section{The velocity vectors and the support}\label{sec4}

In this section, we discuss the properties of the directional velocity vectors  and the geometrical features of  the region generated by them, i.e., the set of all possible positions of the moving particle at time $t>0$. 

\subsection{The directional vectors}\label{subsec4_1}

Let $\vec{v}_j$, $1 \leq j \leq 4$, be the vectors representing the possible directions of the cyclic motion in $\mathbb{R}^3$ (i.e., the minimal number of directions for a non trivial motion in $\mathbb{R}^3$). The velocity direction coordinates can be defined by using the spherical coordinates, given by
\begin{equation*} 
	\vec{v}_{j}=(\vec{l} \cos\theta_{j} \sin\varphi_{j}  + \vec{m} \sin\theta_{j} \sin\varphi_{j} + \vec{n}\cos\varphi_{j}),
\end{equation*}
where $\vec{l}, \vec{m}, \vec{n}$ are the unit vectors along the Cartesian coordinate axes in $\mathbb{R}^3$, $\varphi_j$ is the azimuthal angle in the $xy$-plane from the $x$-axis with $\theta_j \in [0,2\pi]$ and $\theta_{j}$ is the polar angle from the positive $z$-axis with $\varphi_j \in [0,\pi]$, for $1 \leq j \leq 4$.
\par
The particle starts from the origin $\boldsymbol{0}=(0,0,0)\in \mathbb{R}^3$ at time $t = 0$, running with constant velocity $c > 0$. 
%
Initially, it moves along the direction $\vec{v}_{1}$ . Then, after a random duration $D_{1,1}$, the particle switches instantaneously its speed, moving along  $\vec{v}_{2}$  for a random duration  $D_{2,1}$. Subsequently, it goes along $\vec{v}_{3}$ and $\vec{v}_{4}$ for a length of time $D_{3,1}$ and $D_{4,1}$, respectively. Thus, the particle motion proceeds cyclically with velocities $\vec{v}_j$ for the random periods $D_{j,2}, D_{j,3}, D_{j,4}, \ldots$, such that,  for each $1 \leq j \leq 4$ and $k \in \mathbb{N}$, we have
\begin{equation}
	D_{j,4k+i}\overset{d}{=}D_{j,i},
	\label{eq17}
\end{equation}
where $\overset{d}{=}$ means equality in distribution. Hence, during the $n$-th cycle the particle moves along directions $\vec{v}_j$ in sequence for the random lengths $D_{j,n}$,  with $1 \leq j \leq 4$ and $n \in \mathbb{N}$. In other words, the particle runs along direction $\vec{v}_j$ and, after an intertime distributed as a CGP, it takes the direction $\vec{v}_{j+1}$, with $\vec{v}_{j+4n}=\vec{v}_j$, for $1 \leq j \leq 4$ and $n \in \mathbb{N}$. We note that the results concerning the cases of other initial direction, i.e., $\vec{v}_2$, $\vec{v}_3$, $\vec{v}_4$, may be easily deduced.
Now, in order to define the set of all possible positions occupied by the particle at an arbitrary time $t >0$, we consider the following  Remark.
\begin{remark} \label{prop1}
	Let be $\{(\boldsymbol{X}(t),V(t)), t \geq 0\}$ the stochastic process defined in Section \ref{sec2}. Given the direction vectors $\vec{v}_j$ with $\vec{v}_{j+4n}=\vec{v}_j$, for $1 \leq j \leq 4$ and $n \in \mathbb{N}$, we have that the particle motion reaches any state of $\mathbb{R}^3$ in a sufficiently large time $t>0$, if and only if, 
	\begin{enumerate}
		\small
		\item[(i)] the set of direction vectors $\{\vec{v}_1, \vec{v}_2, \vec{v}_3\}$ are linearly independent;
		\item[(ii)] the direction vector $\vec{v}_4\in S$ with 
		\begin{equation*} 
			S=\Big\{\xi(-\vec{v}_1)+\eta(-\vec{v}_2)+(1-\xi-\eta) (-\vec{v}_3),  0 \ \leq \xi,\eta \leq 1, \ \xi+\eta \leq 1\Big\}.
		\end{equation*}
	\end{enumerate}
\end{remark}
Hence, the set of all possible positions under the hypothesis of Remark \ref{prop1} identifies as state space at time $t$ a tetrahedron whose vertices coincide with the endpoints of the vectors $ct\vec{v}_j$.  In other words, the tetrahedron is  defined as a $3$-dimensional simplex where the interior points are convex combinations of the four vertices such that $\sum_{j=1}^{4} \alpha_j \vec{v}_j$ with $\sum_{j=1}^{4}\alpha_j =1$ and $\alpha_j \geq 0$. 

\subsection{Analytic representation of  the support}\label{subsec4_2}

Let us consider the points $A_j(t)$, $1 \leq j \leq 4$, defined by $\vv{\boldsymbol{0}A_j(t)}=\vec{v}_j t$, where $\boldsymbol{0}=(0,0,0)$ is the origin of the Cartesian coordinate system. The points $A_j(t)$, $1 \leq j \leq 4$, given by
\begin{equation}
	A_{j}(t)=(ct\cos\theta_{j}\sin\varphi_{j}, ct\sin\theta_{j} \sin\varphi_{j}, ct\cos\varphi_{j}), \quad 1 \leq j \leq 4
	\label{eq19}
\end{equation}
are the vertices of  the  support in $\mathbb{R}^3$ at time $t>0$, i.e., a time-dependent tetrahedron with edges $E_{ij}(t)$ and faces $F_{ijk}(t)$, $1 \leq i,j,k\leq 4$ and $i<j<k$. 
In particular, if the three different vertices $A_i(t)$, $A_j(t)$ and $A_k(t)$ are not aligned, then the particle reaches any point of the support in $\mathbb{R}^3$. This is true since the matrix generated by two adjacent segments $\overrightarrow{A_i (t)A_k(t)}$ and $\overrightarrow{A_k(t) A_j(t)}$ has maximum rank, i.e. 2. Therefore, under the assumptions of Remark \ref{prop1} the equations linking two adjacent vertices can expressed as
\begin{equation}
	a_{ijk}x_1+b_{ijk}x_2 + c_{ijk}x_3 -ctq_{ijk}=0, \quad1 \leq i,j,k \leq 4, \quad i<j<k,
	\label{eq20}
\end{equation}
where
\begin{equation*}
	\begin{aligned}
		a_{ijk}&=(\sin\theta_{j}\sin\varphi_{j}-\sin\theta_{i}\sin\varphi_{i})(\cos\varphi_{k}-\cos\varphi_{i})\\ &-(\sin\varphi_{k}\sin\theta_{k}-\sin\varphi_{i}\sin\theta_{i})(\cos\varphi_{j}- \cos\varphi_{i}), 
	\end{aligned}
\end{equation*}
\begin{equation*}
	\begin{aligned}
		b_{ijk}&=(\sin\varphi_{k}\cos\theta_{k}-\sin\varphi_{i}\cos\theta_{i})(\cos\varphi_{j}-\cos\varphi_{i})\\
		&-(\sin\varphi_{j}\cos\theta_{j}-\sin\varphi_{i}\cos\theta_{i})(\cos\varphi_{k}- \cos\varphi_{i}),
	\end{aligned}
\end{equation*}
\begin{equation*}
	\begin{aligned}
		c_{ijk}&=\sin\varphi_{i}\cos\theta_{i}(\sin\varphi_{j}\sin\theta_{j}-\sin\varphi_{k}\sin\theta_{k})\\
		&+\sin\varphi_{j}\cos\theta_{j}(\sin\varphi_{k}\sin\theta_{k}-\sin\varphi_{i}\sin\theta_{i})\\
		&+ \sin\varphi_{k}\cos\theta_{k}(\sin\varphi_{i}\sin\theta_{i}-\sin\varphi_{j}\sin\theta_{j}),
	\end{aligned}
\end{equation*}
and, 
\begin{equation*}
	q_{ijk}=a_{ijk}\cos\theta_{i}\sin\varphi_{i}+ b_{ijk} \sin\theta_{i}\sin\varphi_{i}+c_{ijk} \cos\varphi_{i}.
	\label{eq24}
\end{equation*}
\par
We recall that $V(t)$ is the current direction of motion at time $t$. Assuming that $V(0)$ takes value $\vec{v}_1$ with velocity $c >0$, we are able to determine the analytic expression of the support in $\mathbb{R}^{3}$,  say $\mathcal{T}(t)$, i.e., the set of all positions allocated by the particle when it is confined in a tetrahedron, defined as
\begin{equation}
	\begin{aligned} 
		\mathcal{T}(t)=\left\{ \begin{array}{rl}
			(x_1,x_2, x_3)\in \mathbb{R}^{3}:  
			\begin{cases}
				a_{123}x_1+b_{123}x_2+c_{123}x_3 -ctq_{123} \geq 0\\
				a_{124}x_1+b_{124}x_2+c_{124}x_3 -ctq_{124} \geq 0\\
				a_{234}x_1+b_{234}x_2+c_{234}x_3 -ctq_{234} \geq 0\\
				a_{134}x_1+b_{134}x_2+c_{134}x_3 -ctq_{134} \leq 0
			\end{cases}
		\end{array}
		\right\},
	\end{aligned}
	\label{eq25}
\end{equation}
where the given conditions derive from Eqs.\,(\ref{eq20}). In general, the particle motion includes different mutually exclusive cases based on the initial assumptions $\boldsymbol{X}(0)=\boldsymbol{0}$ and $V(0)=\vec{v}_j$, for a fixed $1\leq j\leq 4$. More precisely, the particle is situated on the vertices $A_j(t)$ of  $\mathcal{T}(t)$ if no velocity change occurs up to time $t$, while if one event occurs then, at time $t$, it will be located on some edge $E_{ij}$ of $\mathcal{T}(t)$.
Two events allow the particle to reach one of the faces $F_{ijk}$ of  $\mathcal{T}(t)$ and three or more changes of direction force the particle to be located inside $\mathcal{T}(t)$. Therefore, recalling Remark \ref{prop1}, the following holds.
\begin{remark}\label{remark:new}
For the probability law of the process $\{(\boldsymbol{X}(t),V(t)), t \geq 0\}$ we have:
		\begin{enumerate}
			\item[(i)] the first component is related to the case when no event occurs in the interval $(0,t)$, i.e., $N(t)=0$, and the particle is concentrated on the vertices of $\mathcal{T}(t)$; 
			\item[(ii)] the second component is concerning the situation in which one event happens at time $t$, i.e., $N(t)=1$, and the particle is located on some edge of $\mathcal{T}(t)$;
			\item[(iii)] the third component corresponds to the instance when two events occur,  i.e., $N(t)=2$ and the particle reaches one of the faces of $\mathcal{T}(t)$;
			\item[(iv)] the forth component, which is absolutely continuous, refers to the case when $N(t) \geq 3$, i.e. when the particle is placed strictly in the interior of $\mathcal{T}(t)$, namely ${\rm Int}(\mathcal{T}(t))$.
		\end{enumerate}
\end{remark}

\section{The probability law}\label{sec5}

To define the conditional distributions of the process $\{(\boldsymbol{X}(t),V(t)), t \geq 0\}$, we denote by 
	\begin{equation}
		C_i:=\{\boldsymbol{X}(0)=\boldsymbol{0},V(0) = \vec{v}_i\},
		\label{eq6}
	\end{equation}
	the event that the particle starts its motion from the origin $\boldsymbol{0}$ when the initial velocity is $\vec{v}_i$, $1\leq i \leq 4$. Therefore, let $\mathbb{P}_i$ be the probability conditional on $C_i$.

\par
The conditional probability laws given $C_i$ are composed by two contributions:
\begin{enumerate}
		\item[(i)] the initial components given by the terms related to the beginning of the motion. Specifically, they describe the cases in which the particle is over the border of the support $\mathcal{T}(t)$ in $\mathbb{R}^3$.
		\item[(ii)] The absolutely continuous component related to the particle motion inside the support $\mathcal{T}(t)$ in $\mathbb{R}^3$.
\end{enumerate}
For the case (ii) we define
	\begin{equation} 
		\begin{aligned}
			p _{ij} ( \boldsymbol{x}, t ) d \boldsymbol{x}&= \mathbb{P}_i\{ \boldsymbol{X}( t) \in d \boldsymbol{x} , V(t) =\vec{v}_j \}\\
			&=\mathbb{P}_i \{X_1(t) \in d x_1 , X_2( t) \in d x_2, X_3( t) \in d x_3, V(t) =\vec{v}_j\},
		\end{aligned}
		\label{eq7}
	\end{equation}
	with $1 \leq i,j \leq4$ and where $d \boldsymbol{x}$ is the infinitesimal element in the space $\mathbb{R}^3$ with the Lebesgue measure  $\mu(d  \boldsymbol{x})=dx_1 dx_2 dx_3$ ($i$ refers to the initial velocity, and $j$ to the  current velocity at time $t$). 
	The right-hand-side of Eq.\,(\ref{eq7}) represents the probability that the particle at time $t>0$ is located in a neighborhood of $\boldsymbol{x}\in \mathbb{R}^3$  and moves along direction $\vec{v}_j$, given the initial condition represented by $C_i$. Moreover, we can introduce the probability density function (p.d.f.) of the particle location $\boldsymbol{X}$(t) at time $t>0$, i.e.,
	\begin{equation} 
		p_{i}(\boldsymbol{x},t)= \mathbb{P}_i\{\boldsymbol{X}(t)\in d\boldsymbol{x}\}, \quad  1 \leq i \leq 4.
		\label{eq8}
	\end{equation}
	Due to Eqs.\,(\ref{eq7}) and (\ref{eq8}), one immediately has
	\begin{equation} 
		p_i(\boldsymbol{x},t)= \sum_{j=1}^{4}p_{ij}(\boldsymbol{x},t), \quad \quad  1 \leq i \leq 4.
		\label{eq9}
	\end{equation}   

\par
In order to determine the probability law of the stochastic process $\{(\boldsymbol{X}(t),V(t)), t \geq 0\}$, we assume that the starting direction at time $t$ is $\vec{v}_1$, so that, the initial condition is $C_1$ as defined in Eq.\,(\ref{eq6}). Therefore, the following results are obtained by taking into account the cases (i)-(iv) considered in Remark \ref{remark:new}.
\begin{theorem}\label{theor1}
	(Initial components) Let be $\{(\boldsymbol{X}(t),V(t)), t \geq 0\}$ the stochastic process defined in Section \ref{sec2}.  For $t \geq 0$ we have
	\begin{equation}
		\begin{aligned}
			\mathbb{P}_1\{\boldsymbol{X}(t)&=(ct\cos\theta_{j}\sin\varphi_{j}, ct \sin \theta_{j}\sin\varphi_{j},ct\cos\varphi_{j}), V(t)=\vec{v}_{1}\}\\
			&=\mathbb{P}\{D_{1,1} >t\},
		\end{aligned}
		\label{eq26}
	\end{equation}
	\begin{equation}
		\mathbb{P}_1\{\boldsymbol{X}(t)\in E_{12}(t), V(t)=\vec{v}_{2}\}=\mathbb{P}\{D_{1,1} < t \leq D_{1,1}+D_{2,1}\}, 
		\label{eq27}
	\end{equation}
	and
	\begin{equation}
		\mathbb{P}_1\{\boldsymbol{X}(t)\in F_{123}(t), V(t)=\vec{v}_{3}\}
		= \mathbb{P}\{D_{1,1}+D_{2,1} < t \leq D_{1,1}+D_{2,1}+D_{3,1}\},
		\label{eq28}
	\end{equation}
	where $D_{j, k}$ is the random duration of the $k$-th time interval during which the particle moves with velocity $\vec{v}_j$, for $1 \leq j \leq 3$ and $k \in \mathbb{N}$.
\end{theorem}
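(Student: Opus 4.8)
The plan is to prove each of the three identities by translating the geometric event on the left-hand side into an equivalent event on the counting process $N(t)$ (equivalently, on the arrival times $T_k$), and then to read off the probability from the dynamics in Remark \ref{remark:new}. The key observation throughout is that, under the initial condition $C_1$, the velocity sequence is deterministic: the particle travels along $\vec v_1$ up to $T_1$, along $\vec v_2$ on $(T_1,T_2)$, along $\vec v_3$ on $(T_2,T_3)$, and so on, where $T_1=D_{1,1}$, $T_2=D_{1,1}+D_{2,1}$, $T_3=D_{1,1}+D_{2,1}+D_{3,1}$ by Eqs.\,(\ref{eq1}) and (\ref{eq5}). Hence the randomness in the position at time $t$ enters \emph{only} through which sojourn interval $t$ falls into, i.e.\ through the value of $N(t)$.

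For Eq.\,(\ref{eq26}) I would argue as follows. On $C_1$, the event $\{N(t)=0\}$ means no reversal has occurred in $(0,t)$, which by Eq.\,(\ref{eq2}) is exactly $\{T_1>t\}=\{D_{1,1}>t\}$. On this event the displacement formula (\ref{eq4}) collapses to its last term with $N(t)=1$ in the indexing convention there, giving $\boldsymbol X(t)=t\,\vec v_1=ct(\cos\theta_1\sin\varphi_1,\sin\theta_1\sin\varphi_1,\cos\varphi_1)$, which is precisely the vertex $A_1(t)$ of Eq.\,(\ref{eq19}), and the current velocity is $V(t)=\vec v_1$. The position is thus deterministic \emph{given} no switch, so the probability of the stated position-velocity pair equals $\mathbb P\{D_{1,1}>t\}$. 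For Eq.\,(\ref{eq27}), the event $\{V(t)=\vec v_2\}$ under $C_1$ forces exactly one reversal, i.e.\ $N(t)=1$, equivalently $T_1\le t<T_2$, which reads $D_{1,1}<t\le D_{1,1}+D_{2,1}$; on this event (\ref{eq4}) gives $\boldsymbol X(t)=D_{1,1}\vec v_1+(t-D_{1,1})\vec v_2$, a convex-type combination of $\vec v_1$ and $\vec v_2$ that lies on the segment $E_{12}(t)$ joining $A_1(t)$ and $A_2(t)$, so $\{\boldsymbol X(t)\in E_{12}(t),V(t)=\vec v_2\}=\{N(t)=1\}$. The analogous step for Eq.\,(\ref{eq28}) uses $N(t)=2$, i.e.\ $T_2\le t<T_3$, and the fact that $D_{1,1}\vec v_1+D_{2,1}\vec v_2+(t-T_2)\vec v_3$ is a nonnegative combination summing (in coefficients divided by $t$) to one, hence lies on the triangular face $F_{123}(t)$ with vertices $A_1,A_2,A_3$.

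The routine part is the coordinate-level verification that the convex combinations land on the correct edge/face, which follows directly from the vertex parametrization (\ref{eq19}) and the simplex description at the end of Section \ref{subsec4_2}; I would state this and not belabor it. The main obstacle I anticipate is handling the set-of-measure-zero events cleanly, in particular the boundary cases where $t$ equals an arrival time and the half-open versus closed interval conventions in the three right-hand sides ($\{D_{1,1}>t\}$ versus $\{D_{1,1}<t\le\cdots\}$): since the $D_{j,k}$ are absolutely continuous with density (\ref{eq13}), the events $\{t=T_k\}$ have probability zero, so the precise placement of the endpoints does not affect the stated probabilities, and I would justify the chosen conventions by this absolute continuity. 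A secondary point to make explicit is that, because on each of these events the map from the sojourn durations to the position is deterministic once $N(t)$ is fixed, no Jacobian or density computation is needed here—these are purely point masses (on vertices) or singular contributions (on lower-dimensional faces), which is exactly why they form the \emph{initial}, non-absolutely-continuous, components anticipated in Remark \ref{remark:new}.
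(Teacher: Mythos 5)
Your proposal is correct and takes essentially the same route as the paper, whose proof is a one-line appeal to the correspondence in Remark \ref{remark:new} between $N(t)=0,1,2$ and the particle sitting on a vertex, edge, or face of $\mathcal{T}(t)$; your write-up just makes that correspondence explicit via Eqs.\,(\ref{eq1}), (\ref{eq2}), (\ref{eq4}) and (\ref{eq19}), including the measure-zero boundary conventions the paper leaves implicit. One small imprecision to repair: under $C_1$, the event $\{V(t)=\vec v_2\}$ alone forces $N(t)\in\{1,5,9,\dots\}$ by cyclicity, not $N(t)=1$; the identification with $\{N(t)=1\}$ requires the \emph{joint} event with $\boldsymbol{X}(t)\in E_{12}(t)$, which holds almost surely because for $N(t)\geq 4$ all four residence times are positive with probability one (by absolute continuity of the $D_{j,k}$), placing the particle in ${\rm Int}(\mathcal{T}(t))$ and hence off the edge --- a one-line fix using exactly the tools you already invoke, and the same remark applies to the face event in Eq.\,(\ref{eq28}).
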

\begin{proof}
	Eqs.\,(\ref{eq26}), (\ref{eq27}) and (\ref{eq28}), when  $V(0)=\vec{v}_1$,
	are easily obtained, under the conditions (i), (ii) and (iii) of  Remark $\ref{remark:new}$.
\end{proof}
Using Eq.\,(\ref{eq7}) and recalling (iv) of  Remark $\ref{remark:new}$, we determine the absolutely continuous components of the probability law of the $X(t)$ conditioned on $C_1$, for $t>0$ and $1 \leq j \leq 4$, i.e., the densities
\begin{equation}
	p_{1j} (\boldsymbol{x}, t) d \boldsymbol{x} = \mathbb{P}_1 \{ \boldsymbol{X}(t) \in d \boldsymbol{x} , V ( t ) = \vec{v}_j\}.
	\label{eq29}
\end{equation}
In order to give the expression of  $p_{1 j} ( \boldsymbol{x}, t)$ we introduce the linear map $\zeta: \mathbb{R}^{4} \longrightarrow \mathbb{R}^{4}$ defined by:
\begin{equation} 
	\zeta(\boldsymbol{t})= \bigg(c\sum_{j=1}^{4} x_{1_{\vec{v}_{j}}}t_{j},c\sum_{j=1}^{4} x_{2_{\vec{v}_{j}}} t_{j},c\sum_{j=1}^{4} x_{3_{\vec{v}_{j}}}t_{j},\sum_{j=1}^{4}t_{j}\bigg),
	\label{eq30}
\end{equation}
for $\boldsymbol{t}=(t_1,t_2,t_3,t_4)$, and where $x_1{_{\vec{v}_{j}}}$, $x_2{_{\vec{v}_{j}}}$ and $x_{3_{\Vec{v}_{j}}}$ are the components of the vectors $\vec{v}_{j}$ ($1\leq j \leq 4$) respectively along  the $\boldsymbol{x}$-axes. When a cycle of the random motion ends after a period $\boldsymbol{t}$ the function $\zeta(\boldsymbol{t})$, given in Eq.\,(\ref{eq30}), provides a vector containing the displacements performed along the $\boldsymbol{x}$-axes during the cycle, as well as its whole duration. 
\par  
Moreover, we consider the transformation matrix $\boldsymbol{A}$ of function $\zeta(\boldsymbol{t})$ expressed in terms of spherical coordinates, i.e.,
\begin{equation} 
	\boldsymbol{A}=  \begin{pmatrix}
		c \cos\theta_{1} \sin\varphi_{1}&  c\cos\theta_{2} \sin\varphi_{2}&  c\cos\theta_{3} \sin\varphi_{3}&  c \cos\theta_{4}\sin\varphi_{4}\\
		c \sin\theta_{1} \sin\varphi_{1}&  c\sin\theta_{2} \sin\varphi_{2} &  
		c \sin\theta_{3} \sin\varphi_{3}&  c \sin\theta_{4}\sin\varphi_{4}\\
		c\cos\varphi_{1} & c\cos\varphi_{2} & c\cos\varphi_{3} & c\cos\varphi_{4},\\
		1 & 1 & 1 & 1
	\end{pmatrix}
	\label{eq31}
\end{equation}
with
\begin{equation}
	\begin{aligned}
		det \boldsymbol{A}&=c^{3}\Big\{ \sin\varphi_{1}\sin\varphi_{2}[\sin(\theta_{2}-\theta_{1})](\cos\varphi_{3}-\cos\varphi_{4})\\
		&+\sin\varphi_{1}\sin\varphi_{3}[\sin(\theta_{3}-\theta_{1})](\cos\varphi_{4}-\cos\varphi_{2})\\
		&+ \sin\varphi_{1}\sin\varphi_{4}[\sin(\theta_{4}-\theta_{1})](\cos\varphi_{2}-\cos\varphi_{3})\\
		&+\sin\varphi_{2}\sin\varphi_{3}[\sin(\theta_{3}-\theta_{2})](\cos\varphi_{1}-\cos\varphi_{4})\\
		&+ \sin\varphi_{2}\sin\varphi_{4}[\sin(\theta_{4}-\theta_{2})](\cos\varphi_{3}-\cos\varphi_{1})\\
		&+\sin\varphi_{3}\sin\varphi_{4}[\sin(\theta_{4}-\theta_{3})](\cos\varphi_{1}-\cos\varphi_{2})	\Big\}.
	\end{aligned}
	\label{eq32}
\end{equation}
\begin{remark}\label{remark1}
	Recalling (\ref{eq19}), we observe that the determinant $\det(t\boldsymbol{A})$ represents the volume $Vol(\mathcal{T}(t))$ of $\mathcal{T}(t)$ defined in (\ref{eq25}). 
\end{remark}
Now, given a sample path of the process $\{(\boldsymbol{X}(t),V(t)), t \geq 0\}$, we denote by $\tau_j = \tau_j(\boldsymbol{x})$ the non negative random variables representing the dwelling times of the particle motion in each direction $\vec{v}_{j}$ ($1\leq j \leq 4$) during $[0,t]$ (i.e., the residence times for the process $\boldsymbol{X}(t)$ on the event that $\boldsymbol{X}(t) = \boldsymbol{x}$, $V(t)=\vec{v}_j$), so that
\begin{equation*} 
	\sum_{j=1}^{4} \tau_{j}=t.
\end{equation*}
Therefore, recalling (\ref{eq31}) and (\ref{eq19}), we can express $\tau_j$, $1\leq j \leq 4$, as the 4-uple
\begin{equation*} 
	\zeta^{-1}(\boldsymbol{x}, t)=(\tau_{1}(\boldsymbol{x}, t), \tau_{2}(\boldsymbol{x}, t), \tau_{3}(\boldsymbol{x}, t),\tau_{4}(\boldsymbol{x}, t)),
\end{equation*}
with
\begin{equation} 
	\tau_j(\boldsymbol{x},t)=\sum_{k=1}^{4}\sigma_{jk}x_k+\sigma_{j4}t,
	\label{eq35}
\end{equation}
where $\sigma_{ij}=\frac{1}{det \boldsymbol{A}} \, cof_{ij} (\boldsymbol{A})$ with  $cof_{ij} (\boldsymbol{A})=(-1)^{i+j} det A_{ij}$, $1 \leq i,j \leq 4$.
Moreover, since the function $\zeta(\boldsymbol{t})$, for all $\boldsymbol{x}\in \mathbb{R}^{3}$, is bijective (see, for more details, \cite{lachal2006cyclic}), it is easy to see that $\boldsymbol{\tau}=(\tau_1,\tau_2,\tau_3,\tau_4)^T$ is solution of the system $\boldsymbol{A \tau} = (\boldsymbol{x},t)^T$. More precisely, in the following proposition the explicit form of Eq.\,(\ref{eq35}) can be obtained.
\begin{proposition}\label{prop2}
	Making use of Eq.\,(\ref{eq35}), for all $\boldsymbol{x} \in \mathbb{R}^3$ the coordinates $\tau_j$, for $1\leq j \leq 4$ and $c>0$,  are expressed as 
	\begin{equation} 
		\tau_{j}= \dfrac{c^{2}}{det\boldsymbol{A}} \bigg[L_{j}x_1 + M_{j}x_2+ N_{j}x_3 -ctP_{j}\bigg], \quad 1\leq j\leq 4,
		\label{eq36}
	\end{equation}
	where 
	\begin{equation*}
		\begin{aligned}
			L_{j}&=\sum_{i=1}^{3}	\sin\varphi_{j\hat{+}i}\sin\theta_{j\hat{+}i}\bigg(\cos\varphi_{j\hat{+}(i+1)}-\cos\varphi_{j\hat{+}(i+2)}\bigg),\\
			M_{j}&=\sum_{i=1}^{3}\sin\varphi_{j\hat{+}i}\cos\theta_{j\hat{+}i}\bigg(\cos\varphi_{j\hat{+}(i+2)}-\cos\varphi_{j\hat{+}(i+1)}\bigg),\\
			N_{j}&=\sum_{i=1}^{3}\sin\varphi_{j\hat{+}i}\cos\theta_{j\hat{+}i}\bigg(\sin\varphi_{j\hat{+}(i+1)}\sin\theta_{j\hat{+}(i+1)}-\sin\varphi_{j\hat{+}(i+2)}\sin\theta_{j\hat{+}(i+2)}\bigg),\\
			P_{j}&=\sum_{i=1}^{3}\sin\varphi_{j\hat{+}i}\cos\theta_{j\hat{+}i}
			\\
			&\times  \bigg(\sin\varphi_{j\hat{+}(i+1)}\sin\theta_{j\hat{+}(i+1)}\cos\varphi_{j\hat{+}(i+2)}-\sin\varphi_{j\hat{+}(i+2)}\sin\theta_{j\hat{+}(i+2)}\cos\varphi_{j\hat{+}(i+1)}\bigg),\end{aligned}
	\end{equation*}
	where $a \hat{+} b$ denotes $(a + b)\ mod\, 4$ and $det \boldsymbol{A}$ can be recovered from (\ref{eq31}).
\end{proposition}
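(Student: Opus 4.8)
The plan is to read off the coordinates $\tau_j$ by solving the linear system $\boldsymbol{A}\boldsymbol{\tau}=(\boldsymbol{x},t)^{T}$ with Cramer's rule. Under the linear-independence hypothesis of Remark \ref{prop1} the map $\zeta$ is bijective, i.e. $\det\boldsymbol{A}\neq 0$ (cf. \cite{lachal2006cyclic}), so $\boldsymbol{\tau}=\boldsymbol{A}^{-1}(\boldsymbol{x},t)^{T}$ is uniquely determined and Eq.\,(\ref{eq35}) already records it as $\tau_j=\sum_{k=1}^{4}\sigma_{jk}x_k+\sigma_{j4}t$ with the $\sigma$'s equal to ratios of cofactors of $\boldsymbol{A}$ to $\det\boldsymbol{A}$. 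Equivalently, replacing the $j$-th column of $\boldsymbol{A}$ by $(\boldsymbol{x},t)^{T}$ and expanding the resulting determinant along that column shows that the coefficient of each of $x_1,x_2,x_3,t$ in $\tau_j$ is, up to the factor $1/\det\boldsymbol{A}$, the signed minor $cof_{\cdot j}(\boldsymbol{A})$ obtained by deleting column $j$ and the corresponding row. Thus the proposition reduces to evaluating, for each $j$, the four $3\times 3$ minors attached to rows $1,2,3,4$.

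First I would dispose of the scalar factors. Deleting column $j$ leaves the three columns indexed cyclically by $j\hat{+}1,j\hat{+}2,j\hat{+}3$; every retained entry in the first three rows of (\ref{eq31}) carries a factor $c$, while the fourth row is the all-ones row. Hence the minors feeding the coefficients of $x_1,x_2,x_3$ (obtained by deleting one of the velocity rows $1,2,3$) each factor as $c^{2}$ times a purely angular $3\times 3$ determinant, whereas the minor feeding the coefficient of $t$ (obtained by deleting the all-ones row $4$) factors as $c^{3}$ times an angular determinant. This is exactly what produces the common prefactor $c^{2}/\det\boldsymbol{A}$ in (\ref{eq36}) together with the extra $c$ appearing inside the term $-ctP_j$.

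Next I would expand each of the four angular $3\times 3$ determinants by a Laplace expansion along a convenient row. Expanding the three determinants tied to $x_1,x_2,x_3$ along the all-ones row turns each into an alternating sum of three $2\times 2$ minors, and collecting the products by the common angular factor yields the cyclic sums $\sum_{i=1}^{3}$ that define $L_j,M_j,N_j$; expanding the remaining (fully angular) determinant analogously yields $P_j$. In each case the summand over $i$ is precisely the $2\times 2$ minor built from the two directions $j\hat{+}(i+1)$ and $j\hat{+}(i+2)$, which is how the indices read modulo $4$ enter the statement.

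The hard part will be the sign and index bookkeeping, not the determinant evaluations, which are routine. Two separate sign sources must be reconciled: the cofactor signs $(-1)^{i+j}$, and the sign picked up when the three surviving columns are permuted from their natural left-to-right order into the cyclic order $j\hat{+}1,j\hat{+}2,j\hat{+}3$ (a cyclic rearrangement of three columns is an even permutation, hence harmless, but this must be confirmed as $j$ ranges over $1,2,3,4$). Checking that these contributions combine so that every $\tau_j$ acquires the \emph{same} uniform expression, with the $i,\,i+1,\,i+2$ pattern of angular indices taken $\bmod\,4$, is the delicate step; once the orientation is fixed for one value of $j$, the cyclic symmetry of the construction propagates it to the remaining three.
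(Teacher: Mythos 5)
Your proposal is correct and follows essentially the same route as the paper: the paper's proof simply invokes Eq.\,(\ref{eq35}) (the cofactor/Cramer representation $\tau_j=\sum_k\sigma_{jk}x_k+\sigma_{j4}t$ with $\sigma$'s given by cofactors of $\boldsymbol{A}$) together with (\ref{eq19}) and (\ref{eq32}), and declares the remaining cofactor evaluation a ``straightforward calculation.'' Your write-up is in fact more explicit than the paper's, since you spell out how the $c^{2}$ prefactor, the extra $c$ in $-ctP_j$, and the cyclic index pattern emerge from the Laplace expansions of the $3\times 3$ minors.
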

\begin{proof}
	Recalling Eqs.\,(\ref{eq19}), (\ref{eq32}) and (\ref{eq35}), after straightforward calculations Eq.\,(\ref{eq36}) is determined.
\end{proof}
Now, for the process $\{(\boldsymbol{X}(t),V(t)), t \geq 0\}$ we are able to formulate the following theorem about the absolutely continuous component concerning  assumption (iv) of  Remark \ref{remark:new}.  For this purpose, we indicate with $f_{D_j^{(n)}}$ the probability density function (p.d.f.) of $D_j^{(n)}$ given in Eq.\,(\ref{eq1}), $1 \leq j\leq 4$, and with $\overline{F}_{D_{j,k+1} \vert D_j^{(k)}} $ the conditional survival function of $D_{j,k+1}$ given $D_{j}^{(k)}$, for $1 \leq j\leq 4$ and $k \in \mathbb{N}_0$.
\begin{theorem}\label{theor2}
	(Absolutely continuous component) For the stochastic process  $\{(\boldsymbol{X}(t),V(t)), t \geq 0\}$ defined in Section \ref{sec2}, under the  initial condition given in Eq.\,(\ref{eq6}) with direction $\vec{v}_1$, we have that, for all $\boldsymbol{x} \in {\rm Int}(\mathcal{T}(t))$ and $t>0$, the absolutely continuous component of the probability law is given by
	\begin{equation}
		\begin{aligned}
			p_{1j}(\boldsymbol{x},t)&=\frac{1}{det\boldsymbol{A}}\sum_{k=0}^{\infty}\bigg\{ \prod_{i=1}^{j-1}f_{D_{i}^{(k+1)}}(\tau_{i})\prod_{i=j+1}^{4}f_{D_{i}^{(k)}}(\tau_{i})\\
			&\times \int_{t-\tau_{j}}^{t} f_{D_{j}^{(k)}}(s-(t-\tau_{j}))\overline{F}_{D_{j,k+1}\vert D_{j}^{(k)}}(t-s \vert s-(t-\tau_{j}))ds\bigg\},
		\end{aligned}
		\label{eq38}
	\end{equation}
	where $\tau_{j}=\tau_{j}(\boldsymbol{x},t)$, for $1 \leq j \leq 4$, is defined in Eq.\,(\ref{eq36}), and $det \boldsymbol{A}$ is given in Eq.(\ref{eq32}).
\end{theorem}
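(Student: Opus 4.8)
The plan is to decompose the event according to the number of completed cycles and then transport the joint law of the sojourn times to a law on the position $\boldsymbol{x}$ through the linear map $\zeta$ of Eq.~(\ref{eq30}). Since the initial direction is $\vec{v}_1$ and the current one is $\vec{v}_j$, being in ${\rm Int}(\mathcal{T}(t))$ under assumption (iv) of Remark~\ref{remark:new} forces $N(t)=4k+j-1$ for some $k\in\mathbb{N}_{0}$; hence I would first write
\begin{equation*}
	p_{1j}(\boldsymbol{x},t)\,d\boldsymbol{x}=\sum_{k=0}^{\infty}\mathbb{P}_1\{\boldsymbol{X}(t)\in d\boldsymbol{x},\,V(t)=\vec{v}_j,\,N(t)=4k+j-1\}.
\end{equation*}
For a fixed $k$ the cyclic rule fixes the total dwelling time along each direction: the directions $i<j$ have been travelled $k+1$ times, so $\tau_i=D_i^{(k+1)}$; the directions $i>j$ have been travelled $k$ times, so $\tau_i=D_i^{(k)}$; while direction $j$ has accumulated the $k$ completed sojourns $D_j^{(k)}$ plus the residual spent in the ongoing $(k+1)$-th sojourn, which by the identity $t-T_{4k+j-1}=\tau_j-D_j^{(k)}$ must satisfy $D_{j,k+1}>\tau_j-D_j^{(k)}$. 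For $j<4$ the term $k=0$ carries the factor $f_{D_i^{(0)}}$ with $D_i^{(0)}=0$, a Dirac mass that does not contribute to the absolutely continuous part on ${\rm Int}(\mathcal{T}(t))$, so the series is effectively supported on those $k$ for which $N(t)\geq 3$.

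Next I would build the joint density of the sojourn times. Choosing the three variables $\{\tau_i:i\neq j\}$ as free, with $\tau_j=t-\sum_{i\neq j}\tau_i$ fixed by $\sum_i\tau_i=t$, and using that the four sequences $D_{i,\cdot}$ are mutually independent, the contributions of the completed directions factorise as $\prod_{i<j}f_{D_i^{(k+1)}}(\tau_i)\prod_{i>j}f_{D_i^{(k)}}(\tau_i)$. The direction $j$ is the delicate one: because the GCP has dependent increments, $D_j^{(k)}$ and $D_{j,k+1}$ are not independent, so I cannot factor their joint law. Instead I would condition on the completed part $D_j^{(k)}=w\in[0,\tau_j]$ and weight by the conditional survival function~(\ref{eq14}) of the ongoing sojourn, obtaining the direction-$j$ factor
\begin{equation*}
	\int_0^{\tau_j}f_{D_j^{(k)}}(w)\,\overline{F}_{D_{j,k+1}\vert D_j^{(k)}}(\tau_j-w\mid w)\,dw,
\end{equation*}
which is exactly the integral in (\ref{eq38}) after the substitution $s=w+(t-\tau_j)$, sending $w\in[0,\tau_j]$ to $s\in[t-\tau_j,t]$ and turning the residual $\tau_j-w$ into $t-s$.

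Finally I would pass from the sojourn times to the position. The map $(\tau_i)_{i\neq j}\mapsto\boldsymbol{x}$ induced by $\zeta$ at fixed $t$ is linear with $\partial x_l/\partial\tau_m=c[(\vec{v}_m)_l-(\vec{v}_j)_l]$, and a column operation in $\det\boldsymbol{A}$ (subtracting the $j$-th column from the others and expanding along the row of ones) shows its Jacobian has modulus $|\det\boldsymbol{A}|$ for every $j$, i.e. the volume factor of Remark~\ref{remark1}. Dividing the joint density by this Jacobian produces the prefactor $1/\det\boldsymbol{A}$, and replacing the free variables by $\tau_i=\tau_i(\boldsymbol{x},t)$ from Proposition~\ref{prop2} yields (\ref{eq38}); summing over $k$ closes the argument. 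I expect the main obstacle to be the direction-$j$ term: isolating the in-progress sojourn, writing its law through the conditional survival function~(\ref{eq14}) rather than a product of marginals, and justifying the change of variables via the bijectivity of $\zeta$ and the positivity of $\det\boldsymbol{A}$ on ${\rm Int}(\mathcal{T}(t))$. The factorisation over the remaining directions and the Jacobian computation are then routine.
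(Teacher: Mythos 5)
Your proposal is correct and follows essentially the same route as the paper's proof: decomposition over the number of completed cycles $k$ (so that $N(t)=4k+j-1$), factorisation of the completed directions' densities using the mutual independence of the four GCPs, treatment of the in-progress sojourn along $\vec{v}_j$ via the conditional survival function \eqref{eq14} applied to $D_{j,k+1}$ given $D_j^{(k)}$, and conversion to a density in $\boldsymbol{x}$ through the Jacobian $\det\boldsymbol{A}$ of the map $\zeta$. The only cosmetic difference is that the paper integrates over the last switching epoch $s=T_{4k+j-1}$ rather than over the completed dwell time $w=D_j^{(k)}$, which is precisely the substitution $s=w+(t-\tau_j)$ you identify.
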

\begin{proof}
	By conditioning on the number of direction switches in $(0,t),$ say $k$, and on the last time $s$ previous $t$ in which the particle changes its velocity from $\vec{v}_4$ to $\vec{v}_1$ in order to restart a new cycle, we reformulate Eq.\,(\ref{eq29}), for  $\boldsymbol{x} \in \mathbb{R}^3$ and $t>0$, as follows:
	\begin{equation}
		\begin{aligned}
			p_{1j}(\boldsymbol{x},t)d\boldsymbol{x}= \sum_{k=0}^{\infty}  \int_{0}^{t} \mathbb{P} \{&T_{4k+j-1}\in ds, X_1(s)+c(t-s)\cos\theta_{j}\sin\varphi_{j}\in dx_1,\\  
			& X_2(s)+c(t-s)\sin\theta_{j}\sin\varphi_{j}\in d x_2, \\
			&X_3(s) + c(t-s)\cos\varphi_{j} \in d x_3,  D_{j,k+1}>t-s\},
		\end{aligned}
		\label{eq39}
	\end{equation}
	for $1 \leq j \leq 4$ and where $T_{4k+j-1}$ is the instant occurring at time $s$ in which the particle changes its direction to velocity $\vec{v}_{j}$. Thus, $\boldsymbol{X}(s)=(X_1(s),X_2(s),X_3(s))$ is the position of the particle when a  change of direction occurs. Therefore, due to Eq.(\ref{eq5}), we have
	\begin{equation*} 
		T_{4k+j-1}= \sum_{i=1}^{j-1}D_{i}^{(k+1)}+  \sum_{i=j+1}^{4}D_{i}^{(k)} + D_{j}^{(k)}.
	\end{equation*}
	Hence, for $s=T_{4k+j-1}$ and using Eq.\,(\ref{eq1}), we can express the coordinates of the particle position $\boldsymbol{X}(s)$ as follows
	\begin{equation}
		\begin{aligned}
			X_1(s)&=c \Bigg[ \sum_{i=1}^{j-1} D_{i}^{(k+1)}\cos\theta_{i}\sin\varphi_{i}+\sum_{i=j+1}^{4} D_{i}^{(k)}\cos\theta_{i}\sin\varphi_{i} + D_{j}^{(k)}\cos\theta_{j}\sin\varphi_{j} \Bigg], \\
			X_2(s)&=c \Bigg[ \sum_{i=1}^{j-1} D_{i}^{(k+1)}\sin\theta_{i}\sin\varphi_{i}+\sum_{i=j+1}^{4} D_{i}^{(k)}\sin\theta_{i}\sin\varphi_{i} + D_{j}^{(k)}\sin\theta_{j}\sin\varphi_{j} \bigg], \\
			X_3(s)&=c \Bigg[ \sum_{i=1}^{j-1} D_{i}^{(k+1)}\cos\varphi_{i}+\sum_{i=j+1}^{4} D_{i}^{(k)}\cos\varphi_{i}+ D_{j}^{(k)}\cos\varphi_{j}\Bigg]. 
		\end{aligned}
		\label{eq41}
	\end{equation}
	The conditions defined in (\ref{eq25}) and $\boldsymbol{X}(s) \in \mathcal{T}(s)$ implies that $s>t-\tau_{j}$, for $1\leq j\leq 4$. Moreover, using \eqref{eq31} and substituting Eqs.\,(\ref{eq41}) in Eq.\,\eqref{eq39}. we have:
	\begin{equation}
		\begin{aligned}
			p_{1j}(\boldsymbol{x},t)= &\dfrac{1}{\det\boldsymbol{A}}  \sum_{k=0}^{\infty} \bigg\{\int_{t-\tau_{j}}^{t} \Psi_{j,k}\bigg[s,x_1-c(t-s)\sin\varphi_{j}\cos\theta_{j},\\
			& x_2-c(t-s)\sin\varphi_{j}\cos\theta_{j},
			x_3-c(t-s)\cos\varphi_{j}\bigg] \\
			&\times \mathbb{P}\bigg[D_{j,k+1}>t-s \vert  T_{4k+j-1}=s, X_1(T_{4k+j-1})=x_1-x_{1_{\vec{v}_{j}}}(t-s),\\
			& X_2(T_{4k+j-1})=x_2-x_{2_{\vec{v}_{j}}}(t-s),  X_3(T_{4k+j-1})=x_3-x_{3_{\Vec{v}_{j}}}(t-s) \bigg] ds\bigg\},
		\end{aligned}
		\label{eq42}
	\end{equation}
	where $\Psi_{j,k}$ is the joint p.d.f. of $( T_{4k+j-1}, X_1( T_{4k+j-1}),X_2( T_{4k+j-1}),X_3( T_{4k+j-1}))$ with:
	\begin{equation*}
		\begin{aligned}
			T_{4k+j-1}&= \sum_{i=1}^{j-1}D_{i}^{(k+1)}+\sum_{i=j+1}^{4}D_{i}^{(k)} + D_{j}^{(k)}, \\
			X_1 (T_{4k+j-1})&= \sum_{i=1}^{j-1}x_{1_{\vec{v}_{i}}}D_{i}^{(k+1)}+  \sum_{i=j+1}^{4}x_{1_{\vec{v}_{i}}}D_{i}^{(k)} + x_{1_{\vec{v}_{j}}}D_{j}^{(k)},\\
			X_2 (T_{4k+j-1})&= \sum_{i=1}^{j-1}x_{2_{\vec{v}_{i}}}D_{i}^{(k+1)}+  \sum_{i=j+1}^{4}x_2{_{\vec{v}_{i}}}D_{i}^{(k)} + x_{2_{\Vec{v}_{j}}D_{j}}^{(k)},\\
			X_3(T_{4k+j-1})&= \sum_{i=1}^{j-1}x_{3_{\vec{v}_{i}}}D_{i}^{(k+1)}+  \sum_{i=j+1}^{4}x_{3_{\vec{v}_{i}}}D_{i}^{(k)} + x_{3_{\vec{v}_{j}}}D_{j}^{(k)}.
		\end{aligned}
	\end{equation*}
	According to (\ref{eq30}), for $1\leq j\leq 4$, we have
	\begin{equation*}
		x_{1_{\vec{v}_{i}}}=v_{i}\sin\varphi_{i}\cos\theta_{i}, \ 
		x_{2_{\vec{v}_{i}}}=v_{i}\sin\varphi_{i}\sin\theta_{i},\ 
		x_{3_{\vec{v}_{i}}}=v_{i}\cos\varphi_{i}.
	\end{equation*}
	Moreover, given the mutual independence of the variables $\{ D_{j,k};  k\in \mathbb{N}\}$, for $1 \leq j\leq 4$, we get
	\begin{equation} 
		\begin{aligned}
			\Psi_{j,k}\bigg[s, x-&c(t-s)\cos\theta_{j}\sin\varphi_{j}, y-c(t-s)\sin\theta_{j}\sin\varphi_{j}, z-c(t-s)\cos\varphi_{j}\bigg]\\
			&=\prod_{i=1}^{j-1}f_{D_{i}^{(k+1)}}(\tau_{i})\prod_{i=j+1}^{4}f_{D_{i}^{(k)}}(t-\tau_{j})f_{D_{j}^{(k)}}[s-(t-\tau_{j})].
		\end{aligned}
		\label{eq45}
	\end{equation}
	Therefore, Eq.\,(\ref{eq38}) is directly obtained replacing Eq.\,(\ref{eq45}) in Eq.\,(\ref{eq42}).
\end{proof}
We point out that Eq.\ (\ref{eq38}) is in analogy with Eq.\ (4.1) of \cite{lachal2006cyclic} in which a particular case of the minimal cyclic random motion in $\mathbb{R}^d$ ($n=d+1$ directions, $n \leq d$) is investigated. Specifically, the integral formulation given in Eq.\,(\ref{eq38}) is based on the application of  Eqs.\,(\ref{eq14}) and (\ref{eq15}) where the assumption of dependent increments, instead of the typical exponential distribution, is taken into account. Differently,  in \cite{orsingher2004cyclic} and \cite{lachal2006minimal} the explicit form of the absolutely continuous part of $\boldsymbol{X}(t)$ is determined but using an approach based on the Bessel functions of higher order.
\par
Note that we analyze the motion in $\mathbb{R}^3$ since the amount of calculations needed is tractable and the obtained results are applicable for practical problems. 
Hence, we can easily conjecture that in $\mathbb{R}^n$ the structure of the absolutely continuous component of the distribution of a cyclic motion with $n+1$ directions (with directions forming a regular or possible irregular tetrahedron) is similar to Eq.\,(\ref{eq38}). We remark that the $n$-dimensional case will be object of a future work.
\par
Clearly, the results given in Theorems \ref{theor1} and \ref{theor2} for the process $\{(\boldsymbol{X}(t),V(t)), t \geq 0\}$ can be extended similarly when the particle starts with velocity $\vec{v}_l$, for $2\leq l \leq 4$. Using this new scheme, a fixed initial direction allows us to obtain tractable expressions for the probability law of the process as illustrated in the following section. 

\section{A special case}\label{sec6}

In this section we analyze a special case of the process $\{(\boldsymbol{X}(t),V(t)), t \geq 0\}$, with initial condition defined by Eq.\,(\ref{eq6}) when $V(0)=\vec{v}_1$ and for fixed directions.
\begin{assumptions} \label{assum1}
	The following conditions hold:
	\begin{enumerate}
		\small
		\item [(i)]The particle's motion is confined in a regular tetrahedron $\mathcal{T}(t)$ characterized by the vertices:
		\begin{equation}
			\begin{aligned}
				A_{1}(t)&=ct\big(1,0,0\big), \quad 
				A_{2}(t)=ct\bigg(-\frac{1}{3}, \frac{2\sqrt{2}}{3},0\bigg), \\
				A_{3}(t)&=ct\bigg(-\frac{1}{3}, -\frac{\sqrt{2}}{3},\sqrt{\frac{2}{3}}\bigg),  \quad
				A_{4}(t)=ct\bigg(-\frac{1}{3}, -\frac{\sqrt{2}}{3},-\sqrt{\frac{2}{3}}\bigg).
				\label{eq46}
			\end{aligned}
		\end{equation}
		\item[(ii)] The vertices in Eq.\,(\ref{eq46}), identified by the directions $\vec{v}_{j}$, for $1 \leq j \leq 4$, satisfy the conditions (i)-(iv) listed in Remark \ref{remark:new}.
		\item[(iii)] The random times $D_{j,n}$, for $n \in \mathbb{N}$ and $1\leq j \leq 4 $, represent the $j$-th duration of the motion within the $n$-th cycle and constitute the intertimes of a GCP with intensity $\lambda_{j} \in \mathbb{R}^{+}$.
	\end{enumerate}
\end{assumptions}
An example of projections onto the state-space $\mathbb{R}^3$ of suitable paths of the process $\{(\boldsymbol{X}(t),V(t)), t \geq 0\}$ under the Assumptions \ref{assum1} is shown in Figure \ref{fig1}.  It is easy to see that  the particle can eventually reach any position of $\mathbb{R}^3$, since $\mathcal{T}(t)  \to \mathbb{R}^3$ as $t \to\infty$.
\begin{figure}[ht]
	\centering
	\includegraphics[scale=0.4]{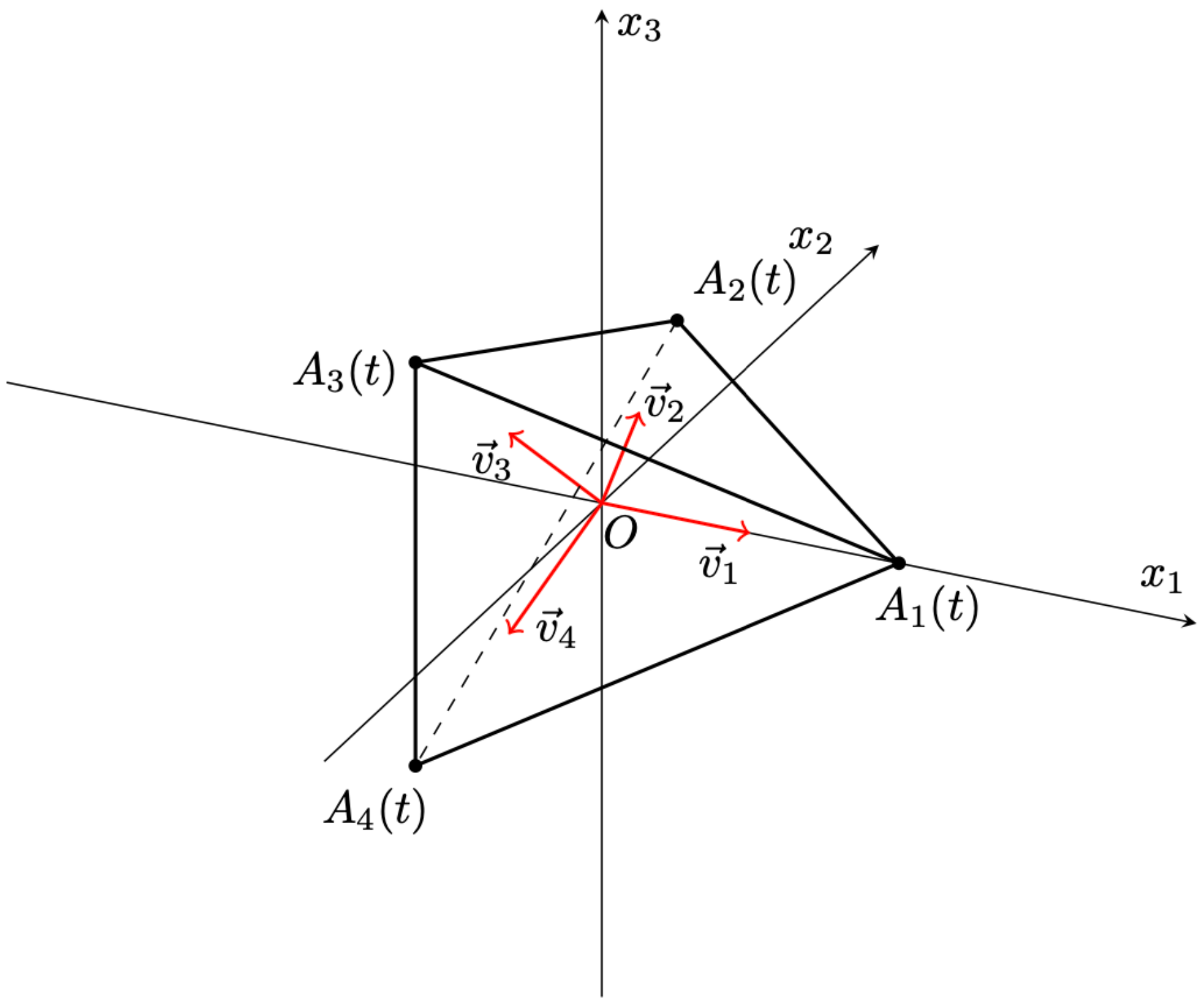}
	\caption{A sample of the set $\mathcal{T}(t)$, with velocities $\vec{v}_j$, for $1 \leq j \leq 4$, and fixed vertices given in (\ref{eq46}).}
	\label{fig1}
\end{figure} 
Moreover, by taking into account the region defined in (\ref{eq25}) and Assumptions \ref{assum1} we have that at every time $t >0$ the set of possible positions $\boldsymbol{x} \in \mathbb{R}^3$ of the moving particle is the tetrahedron
\begin{equation}
	\begin{aligned}
		\mathcal{T}(t)=\bigg\{(x_1, x_2,x_3)\in \mathbb{R}^{3}: \; &-\frac{ct}{3}<x_1<ct; \; -\frac{ct-x_1}{2\sqrt{2}}<x_2<\frac{ct-x_1}{\sqrt{2}}, \\ &\vert x_3 \vert <\frac{\sqrt{6}}{6}(ct-x_1-\sqrt{2}x_2)\bigg\}.
	\end{aligned}
	\label{eq47}
\end{equation}
\begin{remark}
	We observe that the set $\mathcal{T}(t)$ grows as time elapses and, recalling Remark \ref{remark1}, the volume is given by $Vol(\mathcal{T}(t))= \big(\frac{4}{3}\big)^{2}\sqrt{3}(ct)^3$.
\end{remark}
Two sample paths of the region $\mathcal{T}(t)$, defined in (\ref{eq47}), with directions $\vec{v}_j$, $1 \leq j \leq 4$, is illustrated in Figure \ref{fig2}.
\begin{figure}[ht]
	\centering
	\includegraphics[scale=0.4]{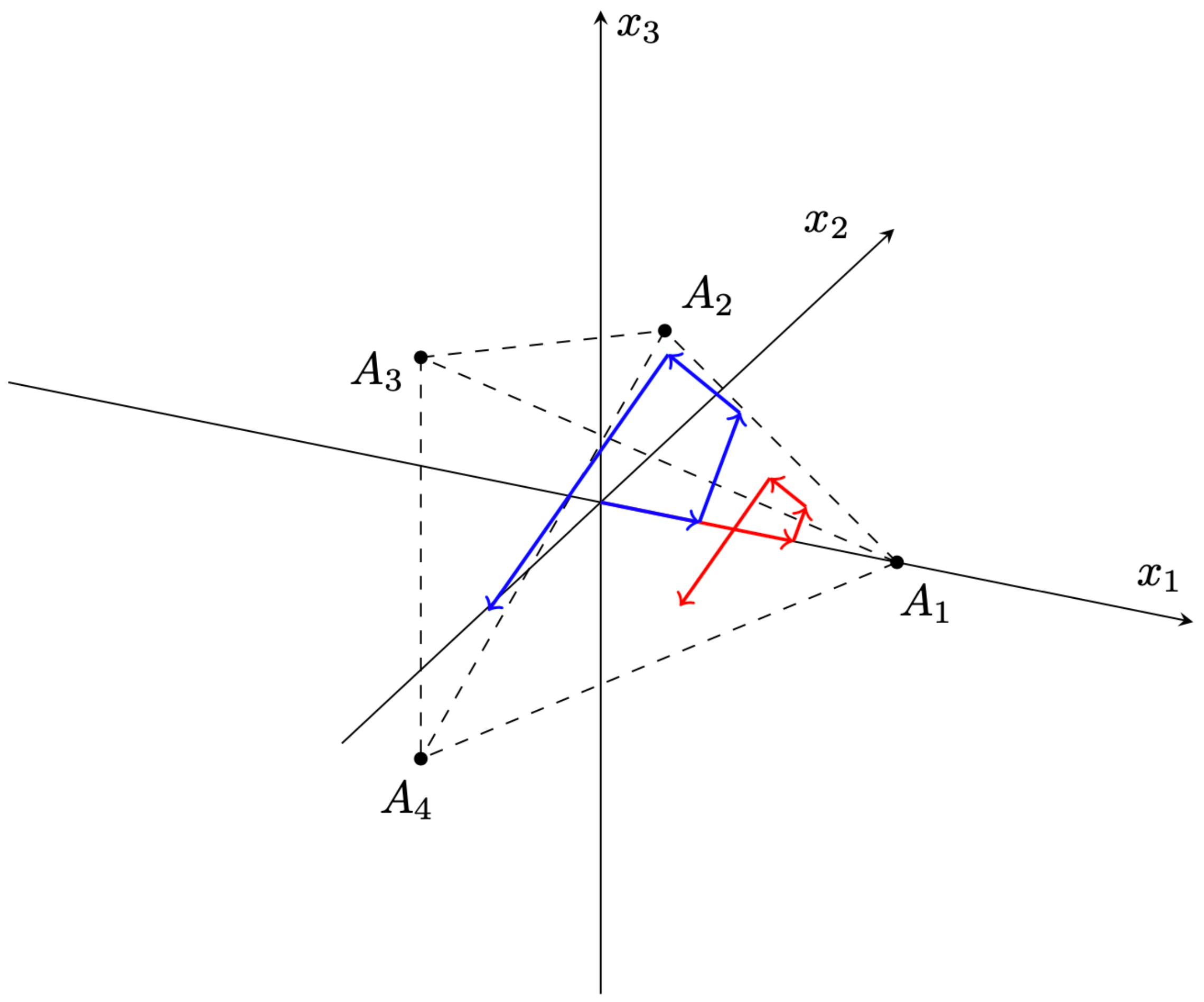}
	\caption{Two possible paths with the first four segments of the cyclic motion defined in Section \ref{sec2} under the conditions of  Remark \ref{prop1} with $c=t=1$. The initial velocity is $\vec{v}_1$.}
	\label{fig2}
\end{figure} 
Under the hypothesis of  Assumptions \ref{assum1}, we are now able to determine the explicit probability laws of the process $\{(\boldsymbol{X}(t),V(t)), t \geq 0\}$ when the random intertimes of the motion along the possible directions $\vec{v}_j$ follow four possible independent GCPs with intensities $\lambda_j$, $1 \leq j \leq 4$.
Specifically, in the following theorem, in order to obtain a tractable form for the third component given in (iii) of Remark \ref{remark:new}, we assume that the sojourn times along the directions $\vec{v}_1$,  $\vec{v}_2$, and $\vec{v}_3$ follow three independent GCPs with identical intensity $\lambda$  (i.e., $\lambda_1=\lambda_2=\lambda_3:=\lambda$).
%
\begin{theorem}\label{theor3}
	(Initial components) Let $\{(\boldsymbol{X}(t),V(t)), t \geq 0\}$ be the stochastic process defined in Section \ref{sec2} under the initial condition defined in Eq.\,(\ref{eq6}) with velocity $\vec{v}_1$. If Assumptions \ref{assum1} hold and the four sequences of intertimes are identically distributed and follow a GCP with intensity $\lambda_j$, $1 \leq j \leq 4$, then for all $t \geq 0$, we have
	\begin{equation}
		\eta_1(t)=\mathbb{P}_1\bigg\{\boldsymbol{X}(t)=(ct, 0, 0), V(t)=\vec{v}_{1}\bigg\}= \dfrac{1}{1+ \lambda_1 t},
		\label{eq48}
	\end{equation}
	\begin{equation}
		\begin{aligned}
			\eta_2(t)&=\mathbb{P}_1\bigg\{ \boldsymbol{X}(t)\in E_{12}, V(t)=\vec{v}_{2} \bigg\}\\
			&=\frac{\lambda_1}{(\lambda_1+\lambda_2+ \lambda_1 \lambda_2 t)^2}\Bigg\{\frac{\lambda_1 t (\lambda_1+\lambda_2+ \lambda_1 \lambda_2 t)}{1+\lambda_1 t}+ \lambda_2 \log\big[(1+\lambda_1 t)(1+\lambda_2 t) \big]\Bigg\}
		\end{aligned}
		\label{eq49}
	\end{equation}
	and 
	\begin{equation}
		\eta_3(t)=\mathbb{P}_1\bigg\{ \boldsymbol{X}(t)\in F_{123}, V(t)=\vec{v}_{3} \bigg\}=R_{\lambda}(t),
		\label{eq50}
	\end{equation}
	where for $\lambda_1=\lambda_2=\lambda_3=\lambda$ one has
	\begin{equation*}
		\begin{aligned}
			R_{\lambda}(t)&=\frac{1}{(2+\lambda t)^2(3+\lambda t)^3}\\
			&\Bigg\{2 \log(1 + \lambda t) \bigg[(3 + \lambda t) \big( \lambda t (3 + 2 \lambda t)\big) +  4 (2 + \lambda t)^2 \log(2 + \lambda t)\bigg]\\
			&+(2 + \lambda t)\bigg[\pi^2 (2 + \lambda t) + \lambda t (3 + \lambda t)^2 - 
			4 (2 +\lambda t) \log\bigg(\frac{1}{2+\lambda t}\bigg)\log \bigg(\frac{1+\lambda t}{2+ \lambda t}\bigg)\bigg]\\
			&+4 (2 +\lambda t)^2\bigg[Li_2\big(-(1+\lambda t)\big)+2 Li_2\bigg(\frac{1+\lambda t}{2+ \lambda t}\bigg)\bigg]\Bigg\}
		\end{aligned}
	\end{equation*}
	where $Li_2(\cdot)$ is the dilogarithm function.
\end{theorem}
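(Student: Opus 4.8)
The plan is to reduce each of the three identities to the corresponding probabilistic expression furnished by Theorem \ref{theor1} and then to evaluate the resulting integrals explicitly. Under the initial condition $C_1$, Theorem \ref{theor1} gives $\eta_1(t)=\mathbb{P}\{D_{1,1}>t\}$, $\eta_2(t)=\mathbb{P}\{D_{1,1}<t\le D_{1,1}+D_{2,1}\}$ and $\eta_3(t)=\mathbb{P}\{D_{1,1}+D_{2,1}<t\le D_{1,1}+D_{2,1}+D_{3,1}\}$. The key structural observation I would record at the outset is that, since the sequences $D_{1,\cdot},D_{2,\cdot},D_{3,\cdot}$ are generated by mutually independent GCPs, the first intertimes $D_{1,1},D_{2,1},D_{3,1}$ are mutually independent; moreover each of them, being a first intertime (i.e.\ $T_1$ for its own process), has marginal density $f_{D_{j,1}}(s)=\lambda_j/(1+\lambda_j s)^2$ from \eqref{eq13} and tail $\mathbb{P}\{D_{j,1}>s\}=(1+\lambda_j s)^{-1}$, the latter being \eqref{eq14} specialized to $k=1$, $t=0$. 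This independence is what turns the three events into genuine convolution-type integrals of one-dimensional marginals.

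Given this, $\eta_1(t)$ is immediate: it is just the tail $(1+\lambda_1 t)^{-1}$, proving \eqref{eq48}. For $\eta_2(t)$ I would condition on $D_{1,1}=u\in(0,t)$ and integrate the tail of the independent variable $D_{2,1}$ at $t-u$, obtaining
\begin{equation*}
\eta_2(t)=\int_0^t \frac{\lambda_1}{(1+\lambda_1 u)^2}\,\frac{1}{1+\lambda_2(t-u)}\,du .
\end{equation*}
A substitution $p=1+\lambda_1 u$ reduces this to $\lambda_1\int \,dp/[p^2(K-\lambda_2 p)]$ with $K=\lambda_1+\lambda_2+\lambda_1\lambda_2 t$; a three-term partial fraction decomposition $1/[p^2(K-\lambda_2 p)]=A/p+B/p^2+C/(K-\lambda_2 p)$ then integrates to a rational part plus a single logarithm. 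Using that $K-\lambda_2 p$ equals $\lambda_1$ at the upper limit and $\lambda_1(1+\lambda_2 t)$ at the lower limit, the two logarithms combine into $\log[(1+\lambda_1 t)(1+\lambda_2 t)]$ and the rational part collapses to $\lambda_1 t\,K/(1+\lambda_1 t)$, which is exactly \eqref{eq49}.

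For $\eta_3(t)$ the same conditioning on the independent pair $(D_{1,1},D_{2,1})=(u,v)$ over the simplex $\{u,v>0,\ u+v<t\}$ gives the iterated integral
\begin{equation*}
\eta_3(t)=\int_0^t \frac{\lambda_1}{(1+\lambda_1 u)^2}\int_0^{t-u} \frac{\lambda_2}{(1+\lambda_2 v)^2}\,\frac{1}{1+\lambda_3(t-u-v)}\,dv\,du .
\end{equation*}
Here I would impose $\lambda_1=\lambda_2=\lambda_3=\lambda$, as in the statement, precisely to keep the evaluation tractable. The inner $v$-integral is of the same partial-fraction type as in $\eta_2$ and produces a single logarithm together with rational terms; substituting it back, the outer $u$-integration must then integrate logarithms of linear functions against rational kernels. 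This last step is the genuine obstacle: such integrals are no longer elementary, and it is exactly here that the dilogarithm $Li_2$ enters, with the numerical constant $\pi^2$ appearing through the special values $Li_2(1)=\pi^2/6$ and $Li_2(-1)=-\pi^2/12$ produced at the integration endpoints. Assembling the rational remainder, the $\log$ and $\log\cdot\log$ cross terms, and the $Li_2$ contributions over the common denominator $(2+\lambda t)^2(3+\lambda t)^3$ yields $R_\lambda(t)$ and hence \eqref{eq50}; the bookkeeping of these dilogarithm identities is where I expect the bulk of the care to be required.
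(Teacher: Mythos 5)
Your proposal is correct and follows essentially the same route as the paper: reduce the three probabilities to the intertime events of Theorem \ref{theor1}, exploit the mutual independence of $D_{1,1},D_{2,1},D_{3,1}$ together with the GCP marginals in Eqs.\,(\ref{eq13})--(\ref{eq14}), and evaluate the resulting convolution-type integrals (partial fractions for $\eta_2$, dilogarithms for $\eta_3$), which is exactly what the paper does before concluding ``after some calculations''. If anything, your write-up is slightly more careful: your integrand for $\eta_3$ correctly carries the survival factor $1/(1+\lambda_3(t-u-v))$, whereas the paper's displayed integral misprints the density $f_{D_{3,1}}(t-s)$ where the survival function of $D_{3,1}$ is required.
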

\begin{proof}
	Recalling Theorem \ref{theor1},  under Assumption \ref{assum1}, Eq.\,(\ref{eq48}) is obtained making use of Eq.\,(\ref{eq13}) in the following expression
	\begin{equation*}
		\mathbb{P}_1\{\boldsymbol{X}(t) \in A_1(t), V(t)=\vec{v}_{1}\}= 1-\int_{0}^{t}f_{D_{1,1}}(s){\rm d}s.
	\end{equation*}
	Similarly, Eqs.\,(\ref{eq49}) and (\ref{eq50}), when 
	$V(0)=\vec{v}_2$ and $V(0)=\vec{v}_3$, are given by 
	\begin{equation*}
		\mathbb{P}_1\{\boldsymbol{X}(t) \in E_{12}(t), V(t)=\vec{v}_{2} \}=\int_{0}^{t}f_{D_{1,1}}(s) \int_{t-s}^{\infty}f_{D_{2,1}}(u){\rm d}u\,  {\rm d}s
	\end{equation*}
	and
	\begin{equation*}
		\mathbb{P}_1\{\boldsymbol{X}(t) \in F_{123}(t), V(t)=\vec{v}_{3}\}
		= \int_{0}^{t}f_{D_{1,1}+D_{2,1}}(s) \, f_{D_{3,1}}(t-s)  {\rm d}s,
	\end{equation*}
	respectively, after some calculations.
\end{proof}
\begin{figure}[ht]
	\centering
	\includegraphics[scale=0.4]{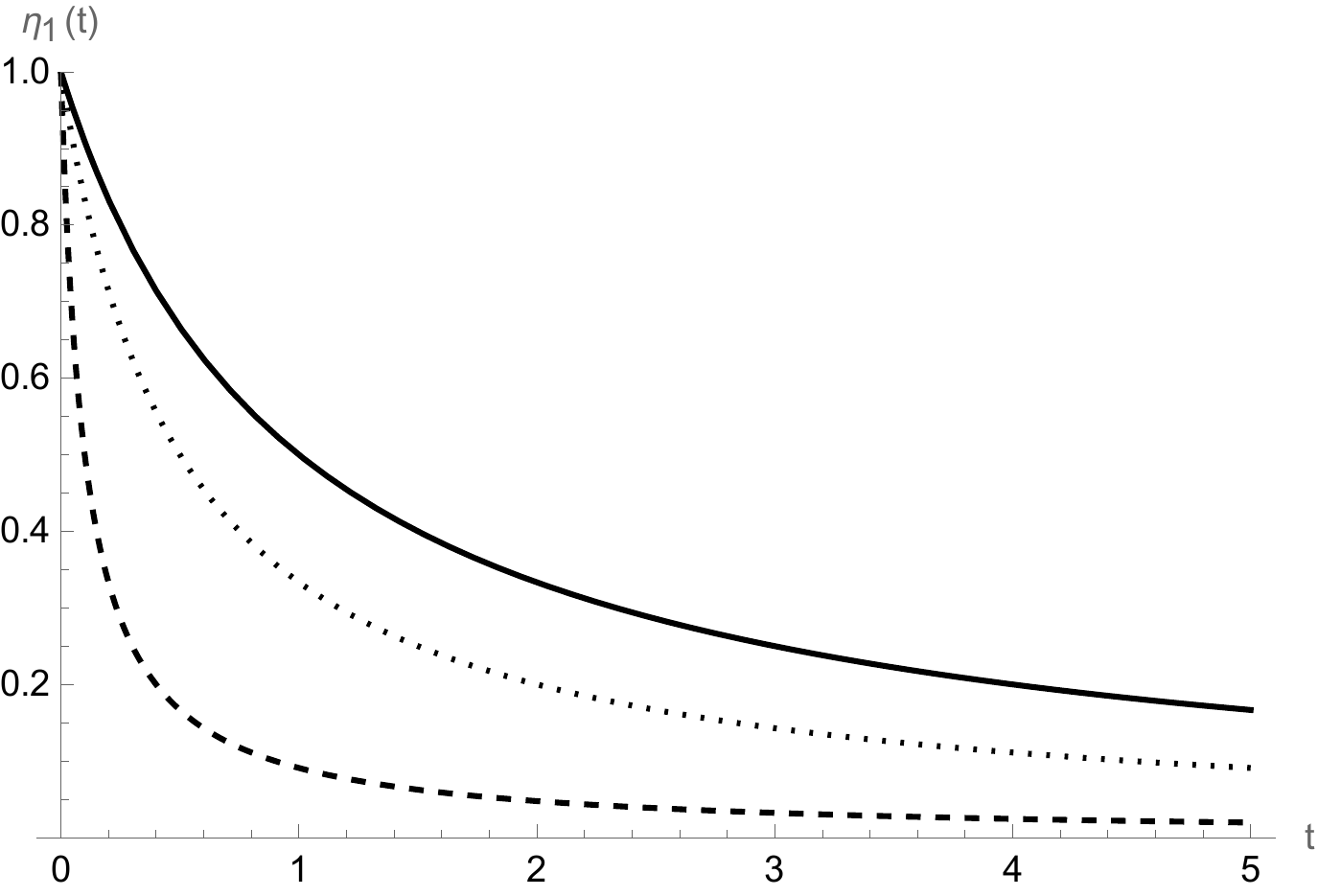}
	\includegraphics[scale=0.4]{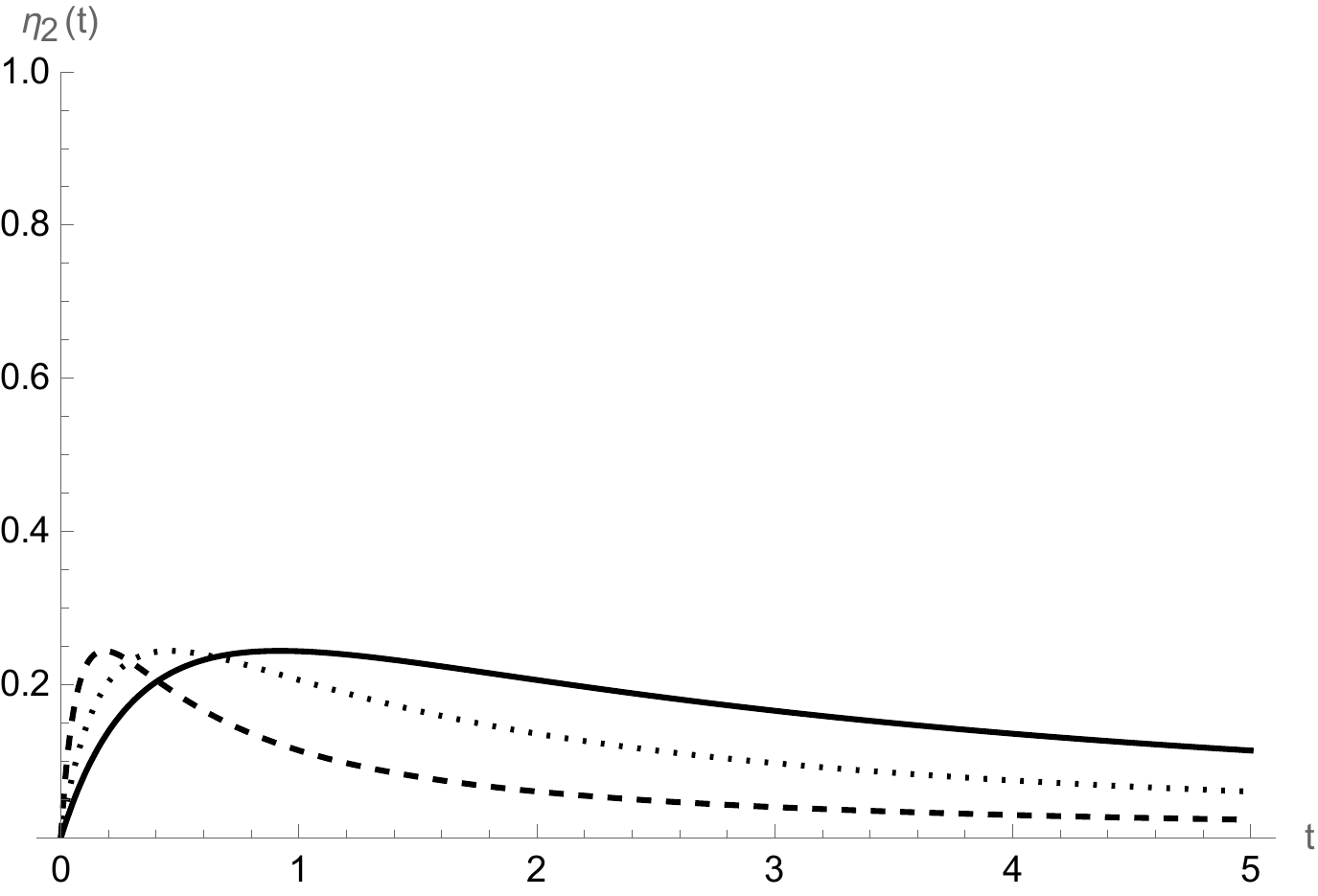}
	\includegraphics[scale=0.4]{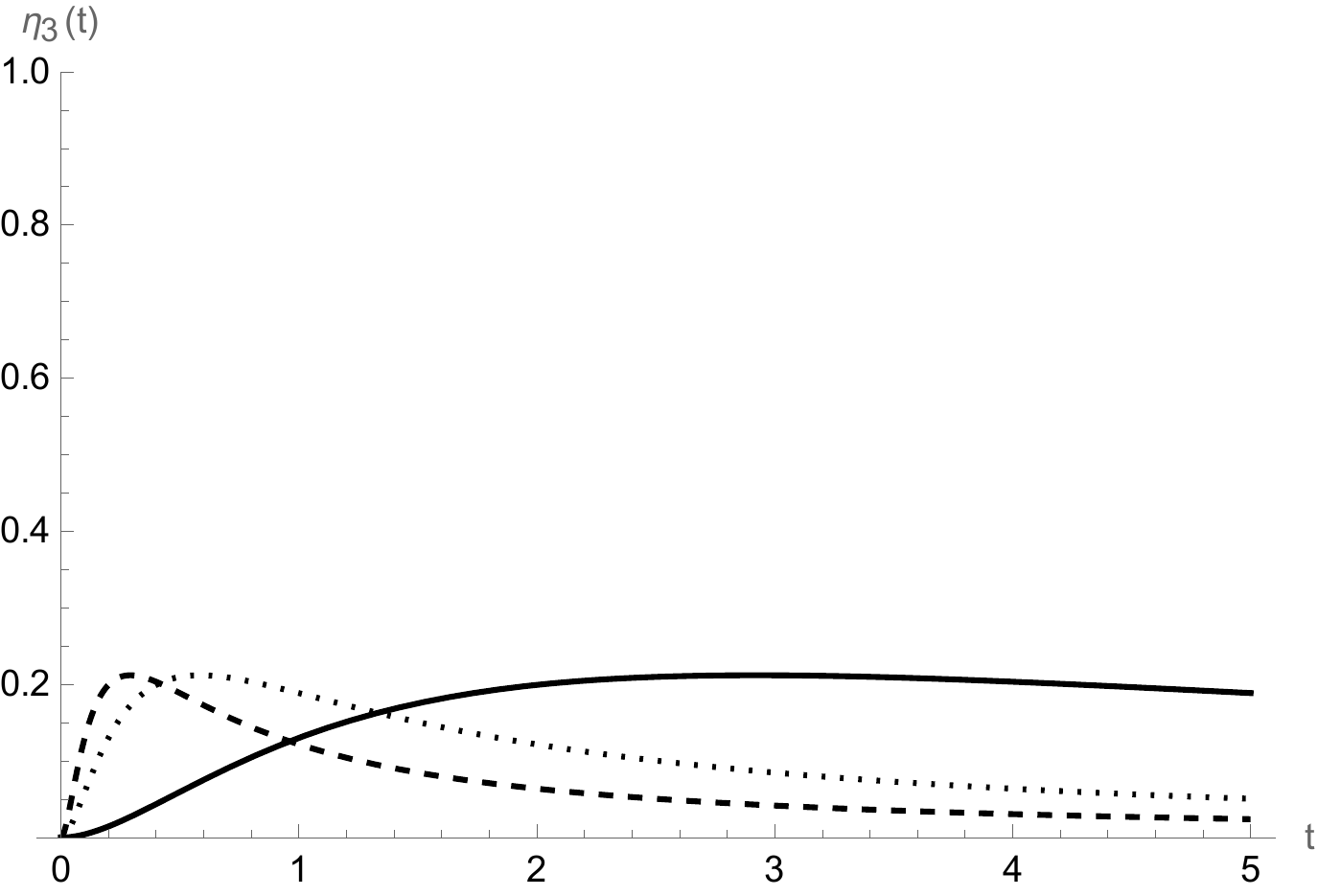}
	\caption{Left to right: plot of  $\eta_1(t)$ with $\lambda_1=$ 1 (solid), 2 (dotted), 10 (dashed);  plot of  $\eta_2(t)$ with $\lambda_1=1$ and  $\lambda_2=2$ (solid line), $\lambda_1=2$ and  $\lambda_2=4$ (dotted line), $\lambda_1=5$ and  $\lambda_2=10$ (dashed line); plot of  $\eta_3(t)$ with $\lambda=$ 1 (solid), 5 (dotted), 10 (dashed).}
	\label{fig3}
\end{figure} 
%
%
In Figure \ref{fig3} are shown some plots of the probabilities given in Eqs.\,(\ref{eq48}), (\ref{eq49}) and (\ref{eq50}) for different choices of the intensities $\lambda_j$, $1 \leq j \leq 3$, respectively.
%
\begin{theorem}\label{theor4}
	(Absolutely continuous components) Let $\{(\boldsymbol{X}(t),V(t)), t \geq 0\}$  be the stochastic process defined in Section \ref{sec2} under the initial condition defined in Eq.\,(\ref{eq6}) with velocity $\vec{v}_1$. For all $\boldsymbol{x} \in {\rm Int}(\mathcal{T}(t))$, with $\mathcal{T}(t)$ given in (\ref{eq47}), we have
	\begin{equation} 
		\begin{aligned}
			p_{11}(\boldsymbol{x},t)&=\lambda_{1}\lambda_{2}\lambda_{3}\lambda_{4}\tau_{1}\\
			&\times \frac{\big[1+A(\boldsymbol{\tau})+B(\boldsymbol{\tau})+C(\boldsymbol{\tau})\big]^{2}+6D(\boldsymbol{\tau})\big[1+A(\boldsymbol{\tau})+B(\boldsymbol{\tau})+C(\boldsymbol{\tau})+D(\boldsymbol{\tau})\big]}{det\boldsymbol{A}\big[1+A(\boldsymbol{\tau})+B(\boldsymbol{\tau})+C(\boldsymbol{\tau})\big]^{4}},\\
			p_{12}(\boldsymbol{x},t)&=2\lambda_{1}^{2}\lambda_{2}\lambda_{3}\lambda_{4}\tau_{1}\tau_{2}(1+\lambda_{2}\tau_{2})(1+\lambda_{3}\tau_{3})(1+\lambda_{4}\tau_{4})\\
			&\times \frac{\big[1+A(\boldsymbol{\tau})+B(\boldsymbol{\tau})+C(\boldsymbol{\tau})+3D(\boldsymbol{\tau})\big]}{det\mathcal{A}\big[1+A(\boldsymbol{\tau})+B(\boldsymbol{\tau})+C(\boldsymbol{\tau})\big]^{4}},\\
			p_{13}(\boldsymbol{x},t)&=2\lambda_{1}^{2}\lambda_{2}^{2}\lambda_{3}\lambda_{4}\tau_{1}\tau_{2}\tau_{3}(1+\lambda_{3}\tau_{3})(1+\lambda_{4}\tau_{4})\\
			&\times \frac{\big[1+A(\boldsymbol{\tau})+B(\boldsymbol{\tau})+C(\boldsymbol{\tau})\big]+3D(\boldsymbol{\tau})}{det\boldsymbol{A}\big[1+A(\boldsymbol{\tau})+B(\boldsymbol{\tau})+C(\boldsymbol{\tau})\big]^{4}},\\
			p_{14}(\boldsymbol{x},t)&=\lambda_{1}\lambda_{2}\lambda_{3}(1+\lambda_{4}\tau_{4})\\
			&\times \frac{\big[1+A(\boldsymbol{\tau})+B(\boldsymbol{\tau})+C(\boldsymbol{\tau})\big]^{2}+6D(\boldsymbol{\tau})[1+A(\boldsymbol{\tau})+B(\boldsymbol{\tau})+C(\boldsymbol{\tau})+D(\boldsymbol{\tau})]}{det \boldsymbol{A}\big[1+A(\boldsymbol{\tau})+B(\boldsymbol{\tau})+C(\boldsymbol{\tau})\big]^{4}},
		\end{aligned}
		\label{eq53}
	\end{equation}
	with
	\begin{equation*} 
		\begin{aligned}
			det \boldsymbol{A}=\frac{16\sqrt{3}}{9} c^{3}&, \quad A(\boldsymbol{\tau})= \sum_{i=1}^{4}\lambda_{i}\tau_{i}, \quad B(\boldsymbol{\tau})= \sum_{\substack{i,j=1\\i<j}}^{4}\lambda_{i}\lambda_{j}\tau_{i}\tau_{j}, \\
			C(\boldsymbol{\tau})&= \sum_{\substack{i,j,k=1\\i<j<k}}^{4}\lambda_{i}\lambda_{j}\lambda_{k}\tau_{i}\tau_{j}\tau_{k}, \quad D(\boldsymbol{\tau})=\prod_{i=1}^{4} \lambda_{i}\tau_{i},
		\end{aligned}
	\end{equation*}
	and where the terms $\tau_{j}=\tau_{j}(\boldsymbol{x},t)$, for $1\leq j \leq 4$, are given by
	\begin{equation} 
		\begin{aligned}
			&\tau_{1}=\frac{ct+3x_1}{4c}, \quad  \tau_{2}=\frac{ct-x_1+2\sqrt{2}x_2}{4c}, \\ \tau_{3}=&\frac{ct-x_1-\sqrt{2}x_2+\sqrt{6}x_3}{4c}, \quad  \tau_{4}=\frac{ct-x_1-\sqrt{2}x_2-\sqrt{6}x_3}{4c}.
			\label{eq55}
		\end{aligned}
	\end{equation}
\end{theorem}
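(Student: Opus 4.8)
The plan is to reduce the general representation (\ref{eq38}) of Theorem \ref{theor2} to an explicit series by inserting the GCP laws, and then to sum that series in closed form. Two geometric inputs are needed first and are essentially bookkeeping: evaluating the determinant (\ref{eq32}) at the spherical angles that produce the regular vertices (\ref{eq46}) gives $\det\boldsymbol{A}=\tfrac{16\sqrt{3}}{9}c^{3}$, and inverting $\boldsymbol{A\tau}=(\boldsymbol{x},t)^{T}$ through Proposition \ref{prop2} yields the dwelling times $\tau_{j}=\tau_{j}(\boldsymbol{x},t)$ of (\ref{eq55}). These I would dispatch quickly, since they carry no probabilistic content.

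Next I would specialise the laws. Because $D_{j}^{(n)}$ is the $n$-th arrival time of a GCP with intensity $\lambda_{j}$, its density is $f_{D_{j}^{(n)}}(\tau)=n\lambda_{j}(\lambda_{j}\tau)^{n-1}/(1+\lambda_{j}\tau)^{n+1}$, and the conditional survival function is the one in (\ref{eq14}). The decisive step is the inner integral of (\ref{eq38}): after the substitution $u=s-(t-\tau_{j})$ it becomes $\int_{0}^{\tau_{j}} f_{D_{j}^{(k)}}(u)\,\overline{F}_{D_{j,k+1}\vert D_{j}^{(k)}}(\tau_{j}-u\vert u)\,du$, and since $\overline{F}_{D_{j,k+1}\vert D_{j}^{(k)}}(\tau_{j}-u\vert u)=\bigl((1+\lambda_{j}u)/(1+\lambda_{j}\tau_{j})\bigr)^{k+1}$ the factors $(1+\lambda_{j}u)$ cancel those in $f_{D_{j}^{(k)}}(u)$. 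The integral then collapses to $(\lambda_{j}\tau_{j})^{k}/(1+\lambda_{j}\tau_{j})^{k+1}$, valid for all $k\ge 0$ (the $k=0$ instance reducing to the survival function at $u=0$), which removes the integration entirely.

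Writing $a_{i}=\lambda_{i}\tau_{i}$, $Q=\prod_{i=1}^{4}(1+a_{i})$ and $D=\prod_{i=1}^{4}a_{i}$, each summand of (\ref{eq38}) then factors as a $k$-independent prefactor times $(D/Q)^{k}$ times the polynomial $\prod_{i=1}^{j-1}(k+1)\cdot\prod_{i=j+1}^{4}k$, the factor $k+1$ arising from each preceding direction (through $f_{D_{i}^{(k+1)}}$) and the factor $k$ from each subsequent one (through $f_{D_{i}^{(k)}}$); moreover the scattered powers of $(1+a_{i})$ recombine so that every denominator becomes a power of $Q$. I would remark that whenever $j<4$ the term $k=0$ drops out, because $D_{i}^{(0)}=0$ almost surely turns the corresponding density into a Dirac mass that vanishes on ${\rm Int}(\mathcal{T}(t))$, so the series effectively starts at $k=1$.

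It then remains to sum the series at $x=D/Q$ using $\sum_{k\ge 0}(k+1)^{3}x^{k}=(1+4x+x^{2})/(1-x)^{4}$ (for $j=1,4$), $\sum_{k\ge 1}(k+1)k^{2}x^{k}=2x(1+2x)/(1-x)^{4}$ (for $j=2$) and $\sum_{k\ge 1}(k+1)^{2}k\,x^{k}=2x(2+x)/(1-x)^{4}$ (for $j=3$). Since $1-x=(Q-D)/Q$ and $Q-D=1+A+B+C$, the powers of $Q$ thrown off by these generating functions cancel against the prefactor, and the numerators collapse to polynomials in $1+A+B+C$ and $D$; substituting $\det\boldsymbol{A}$ and the $\tau_{j}$ of (\ref{eq55}) then gives (\ref{eq53}). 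I expect the main obstacle to be exactly this final reduction: selecting the correct $k$-polynomial for each $j$, summing it, and verifying that all powers of $Q$ cancel so that the result depends only on the symmetric combinations $A,B,C,D$.
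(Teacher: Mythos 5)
Your proposal follows exactly the route the paper intends: the paper's own proof of Theorem \ref{theor4} consists of the single sentence that the result is ``an immediate consequence of Theorem \ref{theor2} after straightforward calculations'', and your proposal supplies precisely those calculations. Your key steps are sound: the cancellation giving
\[
\int_0^{\tau_j} f_{D_j^{(k)}}(u)\,\overline{F}_{D_{j,k+1}\vert D_j^{(k)}}(\tau_j-u\vert u)\,du=\frac{(\lambda_j\tau_j)^k}{(1+\lambda_j\tau_j)^{k+1}},
\]
valid for all $k\geq 0$, is correct; the reading of Eq.\,(\ref{eq38}) that assigns a factor $(k+1)$ to each direction preceding $j$ and a factor $k$ to each direction following $j$ is the right one; the observation that the $k=0$ term vanishes on ${\rm Int}(\mathcal{T}(t))$ for $j<4$ is correct; and your three generating functions are all correct.

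One point, however, must be flagged: carrying out your own recipe for $j=3$ does \emph{not} reproduce the printed $p_{13}$. Writing $S=1+A(\boldsymbol{\tau})+B(\boldsymbol{\tau})+C(\boldsymbol{\tau})$ and $Q=\prod_{i=1}^4(1+\lambda_i\tau_i)=S+D$, the sum $\sum_{k\geq1}(k+1)^2k\,x^k=2x(2+x)/(1-x)^4$ evaluated at $x=D/Q$ produces the numerator factor $2Q+D=2S+3D$, so that the method yields
\[
p_{13}(\boldsymbol{x},t)=2\lambda_1^2\lambda_2^2\lambda_3\lambda_4\,\tau_1\tau_2\tau_3(1+\lambda_3\tau_3)(1+\lambda_4\tau_4)\,
\frac{2\big[1+A(\boldsymbol{\tau})+B(\boldsymbol{\tau})+C(\boldsymbol{\tau})\big]+3D(\boldsymbol{\tau})}{\det\boldsymbol{A}\,\big[1+A(\boldsymbol{\tau})+B(\boldsymbol{\tau})+C(\boldsymbol{\tau})\big]^{4}},
\]
whereas Eq.\,(\ref{eq53}) prints $\big[1+A+B+C\big]+3D$ in that numerator. (For $j=2$ one gets $Q+2D=S+3D$, which does match, and $j=1,4$ match as well, since $Q^2+4DQ+D^2=S^2+6D(S+D)$.) A direct numerical summation of the series from Eq.\,(\ref{eq38}) confirms the factor $2S+3D$, so the printed $p_{13}$ most likely carries a typo (a lost factor $2$, which its anomalous bracketing suggests). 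The gap in your write-up is that you assert the sums ``collapse to (\ref{eq53})'' wholesale: executed faithfully, your argument establishes (\ref{eq53}) for $j=1,2,4$ but contradicts it for $j=3$, and a complete proof must either exhibit that summation explicitly and note the discrepancy, or correct the statement.
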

\begin{proof}
	Eqs.\,(\ref{eq53}) are obtained in a closed form as an immediate consequence of Theorem \ref{theor2} after straightforward calculations.
\end{proof}
We observe that the values of $\tau_{j}$ in (\ref{eq55}) are obtained directly from Eq.\,(\ref{eq36}) in Proposition \ref{prop2}. Moreover, due to Eq.\,(\ref{eq9}), under the assumptions of Theorem \ref{theor4} the p.d.f.\ $p_1(\boldsymbol{x},t)$ can be immediately obtained from Eqs.\,(\ref{eq53}).
\begin{figure}[ht]
	\centering
	\includegraphics[scale=0.45]{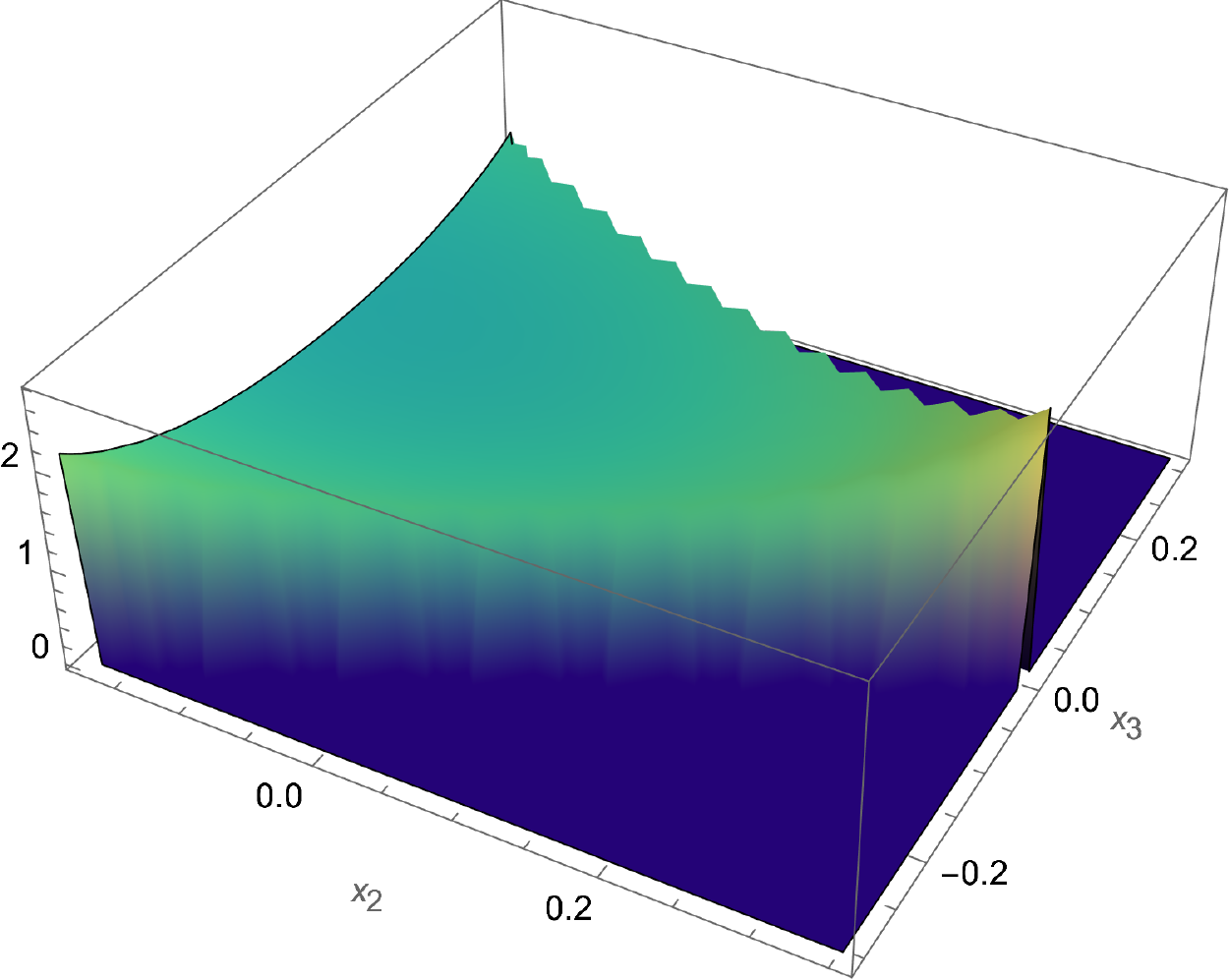}
	\includegraphics[scale=0.45]{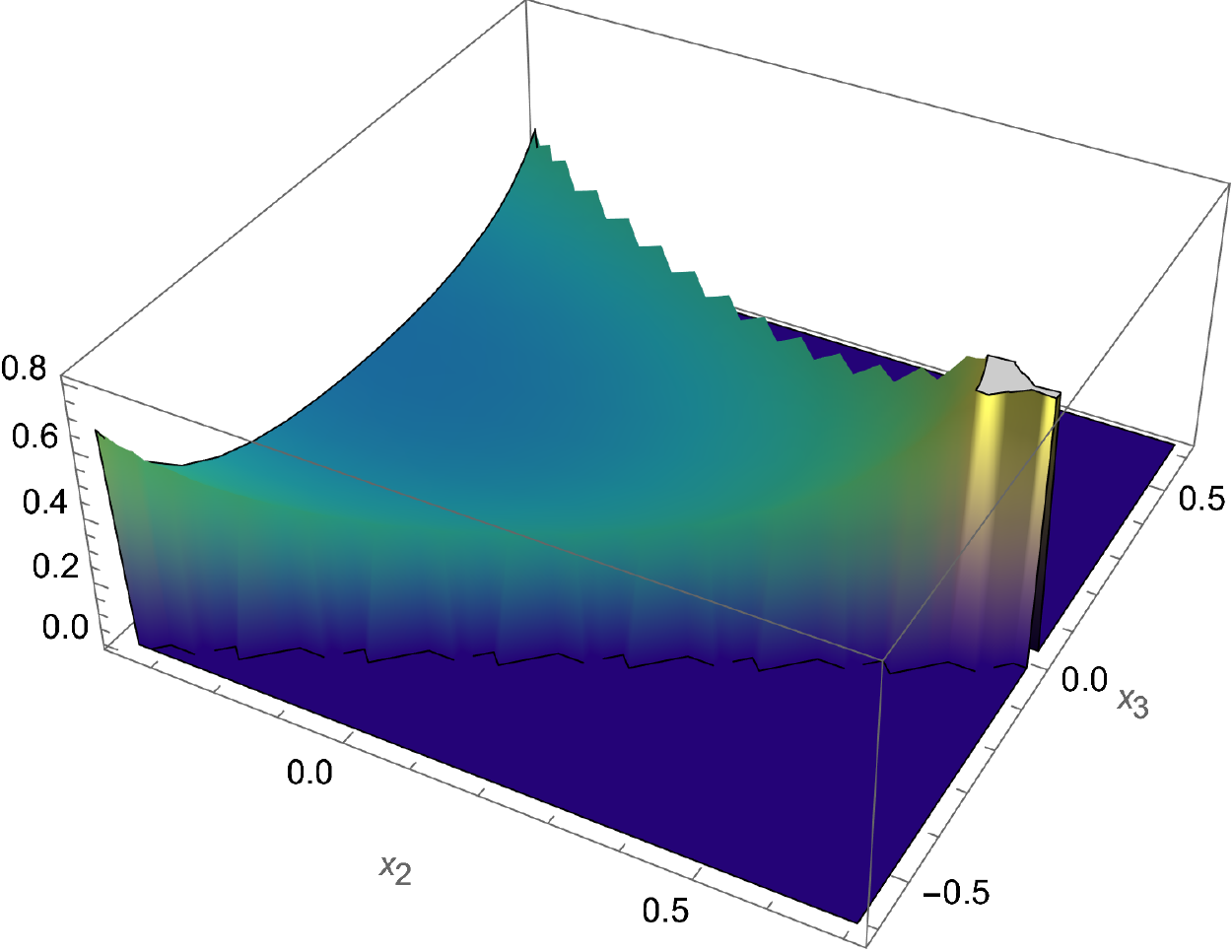}
	\caption{Plot of $p_1(\boldsymbol{x},t)$ for $c=t=1$ and $x_1=1/2$ on the left-side and $c=1$, $t=2$ and $x_1=1$ on the rigth-side.}
	\label{fig4}
\end{figure} 
\par 
As example, some plots of  $p_{1}(\boldsymbol{x},t)$  are illustrated in Figure \ref{fig4} for fixed $c>0$ and $t>0$  and $x_1 \in (-ct/3, ct)$. Note that the domain for $(x_2,x_3)$ has a triangular form. Moreover, from Eq.\,(\ref{eq47}) and Theorem \ref{theor4} it is clear that the dependence on $c$ and $t$ is expressed through the product $ct$.
\par
Hereafter, we study the asymptotic behaviour of the p.d.f. of the particle location $\boldsymbol{X}(t)$ defined in (\ref{eq9}) when the intensities $\lambda_j$ tend to infinity, for $1 \leq j\leq 4$. In particular, substituing Eqs.\,(\ref{eq53}) in Eq.\,(\ref{eq9}) we obtain the following corollary.
\begin{corollary}\label{corol1}
	Under the assumptions of Theorem \ref{theor4}, for $t >0$ and $\boldsymbol{x} \in {\rm Int}(\mathcal{T}(t))$ one has
	\begin{equation*}
		\lim_{\substack{\forall j,\, \lambda_{j} \to + \infty \\
				\forall i,\, \lambda_{1}/\lambda_{i}\to 1}}
		p_{1}(\boldsymbol{x},t)= \xi(\boldsymbol{x},t),
	\end{equation*}
	where $\xi(\boldsymbol{x},t)$ is the following p.d.f.\
	\begin{equation}
		\xi(\boldsymbol{x},t)=\dfrac{6 t (\tau_{1}\tau_{2}\tau_{3}\tau_{4})^{2}}{det \boldsymbol{A}[(\tau_{1}\tau_{2}\tau_{3})^{4}+(\tau_{1}\tau_{2}\tau_{4})^{4}+(\tau_{1}\tau_{3}\tau_{4})^{4}+(\tau_{2}\tau_{3}\tau_{4})^{4}]},
		\label{eq57}
	\end{equation}
	with $\tau_{j}$ expressed in (\ref{eq36}).
\end{corollary}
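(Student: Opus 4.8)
The plan is to read the limit off directly from the closed forms in Theorem~\ref{theor4}, since the infinite series over the cycle index has already been summed there. By \eqref{eq9} the target density is the finite sum $p_1(\boldsymbol{x},t)=\sum_{j=1}^{4}p_{1j}(\boldsymbol{x},t)$, and each $p_{1j}$ in \eqref{eq53} is a rational function of the intensities $\lambda_1,\dots,\lambda_4$ whose coefficients involve only the fixed quantities $\tau_j=\tau_j(\boldsymbol{x},t)$ of \eqref{eq36}. Because the limit is taken with every $\lambda_j\to+\infty$ and every ratio $\lambda_1/\lambda_i\to1$, I would introduce a common scale $\lambda$, write $\lambda_i=\lambda(1+o(1))$, and evaluate each $p_{1j}$ as a ratio of polynomials in $\lambda$, retaining only leading coefficients.

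The key algebraic observation is that $A(\boldsymbol{\tau}),B(\boldsymbol{\tau}),C(\boldsymbol{\tau}),D(\boldsymbol{\tau})$ are exactly the elementary symmetric polynomials in the quantities $\lambda_i\tau_i$, so that
\[ 1+A(\boldsymbol{\tau})+B(\boldsymbol{\tau})+C(\boldsymbol{\tau})+D(\boldsymbol{\tau})=\prod_{i=1}^{4}\bigl(1+\lambda_i\tau_i\bigr). \]
This identity, together with $C(\boldsymbol{\tau})=\sum_{m=1}^{4}\prod_{l\neq m}\lambda_l\tau_l$, immediately yields the orders $A=O(\lambda)$, $B=O(\lambda^2)$, $C=O(\lambda^3)$, $D=O(\lambda^4)$; hence $1+A+B+C\sim C$ and $1+A+B+C+D\sim D$ as $\lambda\to+\infty$. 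In particular the common denominator $[1+A+B+C]^4$ of all four terms behaves like $[C(\boldsymbol{\tau})]^4$, of order $\lambda^{12}$.

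Next I would carry out the term-by-term dominant balance. In the numerators of $p_{11}$ and $p_{14}$ one must notice that $[1+A+B+C]^2=O(\lambda^6)$ is dominated by $6D[1+A+B+C+D]\sim 6D^2=O(\lambda^8)$, whereas in $p_{12}$ and $p_{13}$ the bracket is dominated by $3D=O(\lambda^4)$. Pairing each numerator with its prefactor, whose orders (after expanding the factors $1+\lambda_i\tau_i$) are $\lambda^4,\lambda^8,\lambda^8,\lambda^4$ respectively, and dividing by $[C(\boldsymbol{\tau})]^4$, one checks that every power of $\lambda$ cancels, so each $p_{1j}$ tends to a finite positive limit. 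A short computation shows each of these limits is proportional to $\tau_j$, sharing the common prefactor $6\,(\tau_1\tau_2\tau_3\tau_4)^2/\det\boldsymbol{A}$ and the common denominator coming from $[C(\boldsymbol{\tau})]^4$.

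Finally I would sum the four limits and invoke the constraint $\tau_1+\tau_2+\tau_3+\tau_4=t$ on the dwelling times, which replaces $\sum_j\tau_j$ by $t$ in the numerator and delivers $\xi(\boldsymbol{x},t)$ of \eqref{eq57}, with the $\tau_j$ taken from \eqref{eq55}. The main obstacle is the careful bookkeeping of the powers of $\lambda$: one must verify that the leading contributions survive with the correct coefficient (in particular that $6D^2$, and not $[1+A+B+C]^2$, governs the numerators of $p_{11}$ and $p_{14}$) and that each $p_{1j}$ is genuinely $O(1)$, so that passing the limit through the finite sum is legitimate. A secondary point is to confirm that the corrections arising from $\lambda_i/\lambda_1\to1$ rather than exact equality vanish, which follows from the continuity of all the expressions in the ratios $\lambda_i/\lambda_1$ near $1$.
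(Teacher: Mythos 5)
Your route is the same as the paper's: Corollary~\ref{corol1} is presented there as an immediate consequence of substituting the closed forms \eqref{eq53} into \eqref{eq9} and letting the intensities grow, and your dominant-balance bookkeeping is exactly that computation carried out in detail. The identification of $A,B,C,D$ as the elementary symmetric polynomials in the $\lambda_i\tau_i$, the orders $O(\lambda),O(\lambda^2),O(\lambda^3),O(\lambda^4)$, the asymptotics $1+A+B+C\sim C$ and $1+A+B+C+D\sim D$, the prefactor orders $\lambda^4,\lambda^8,\lambda^8,\lambda^4$, the conclusion that each $p_{1j}$ tends to a finite limit proportional to $\tau_j$, and the final use of $\tau_1+\tau_2+\tau_3+\tau_4=t$ are all correct; the reduction to equal intensities via continuity in the ratios $\lambda_i/\lambda_1$ is also legitimate.

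There is, however, one point you pass over silently, and it is not cosmetic. Your computation produces in the denominator the \emph{fourth power of the sum},
\begin{equation*}
\bigl[\tau_1\tau_2\tau_3+\tau_1\tau_2\tau_4+\tau_1\tau_3\tau_4+\tau_2\tau_3\tau_4\bigr]^4,
\end{equation*}
coming from $[1+A+B+C]^4\sim[C(\boldsymbol{\tau})]^4$, whereas Eq.~\eqref{eq57} as printed has the \emph{sum of fourth powers} $(\tau_1\tau_2\tau_3)^4+(\tau_1\tau_2\tau_4)^4+(\tau_1\tau_3\tau_4)^4+(\tau_2\tau_3\tau_4)^4$. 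These are different symmetric polynomials, so your closing claim that the limit ``delivers $\xi$ of \eqref{eq57}'' is not literally what you proved. In fact your version is the correct one: setting $u_i=\tau_i/t$, the function $6(u_1u_2u_3u_4)^2/\bigl[\sum_{i<j<k}u_iu_ju_k\bigr]^4$ is a genuine probability density on the unit simplex (it is the law of $W_i/\sum_j W_j$ with $W_i=1/Z_i$ and $Z_i$ i.i.d.\ standard exponentials, which is precisely the limiting law of the normalized dwelling times of the four GCPs), whereas the printed expression does not integrate to one; at the centroid $\tau_1=\cdots=\tau_4=t/4$ the two formulas differ by a factor of $64$. So \eqref{eq57} contains a typo (the exponent $4$ belongs outside the bracket), and a complete solution should have flagged this discrepancy rather than asserting agreement with the formula as stated.
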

Some instances of  $\xi(\boldsymbol{x},t)$ given in Eq.\,(\ref{eq57}) are plotted in Figure \ref{fig5} for fixed $c>0$ and $t>0$, with $x_1 \in (-ct/3, ct)$.
\begin{figure}[ht]
	\centering
	\includegraphics[scale=0.45]{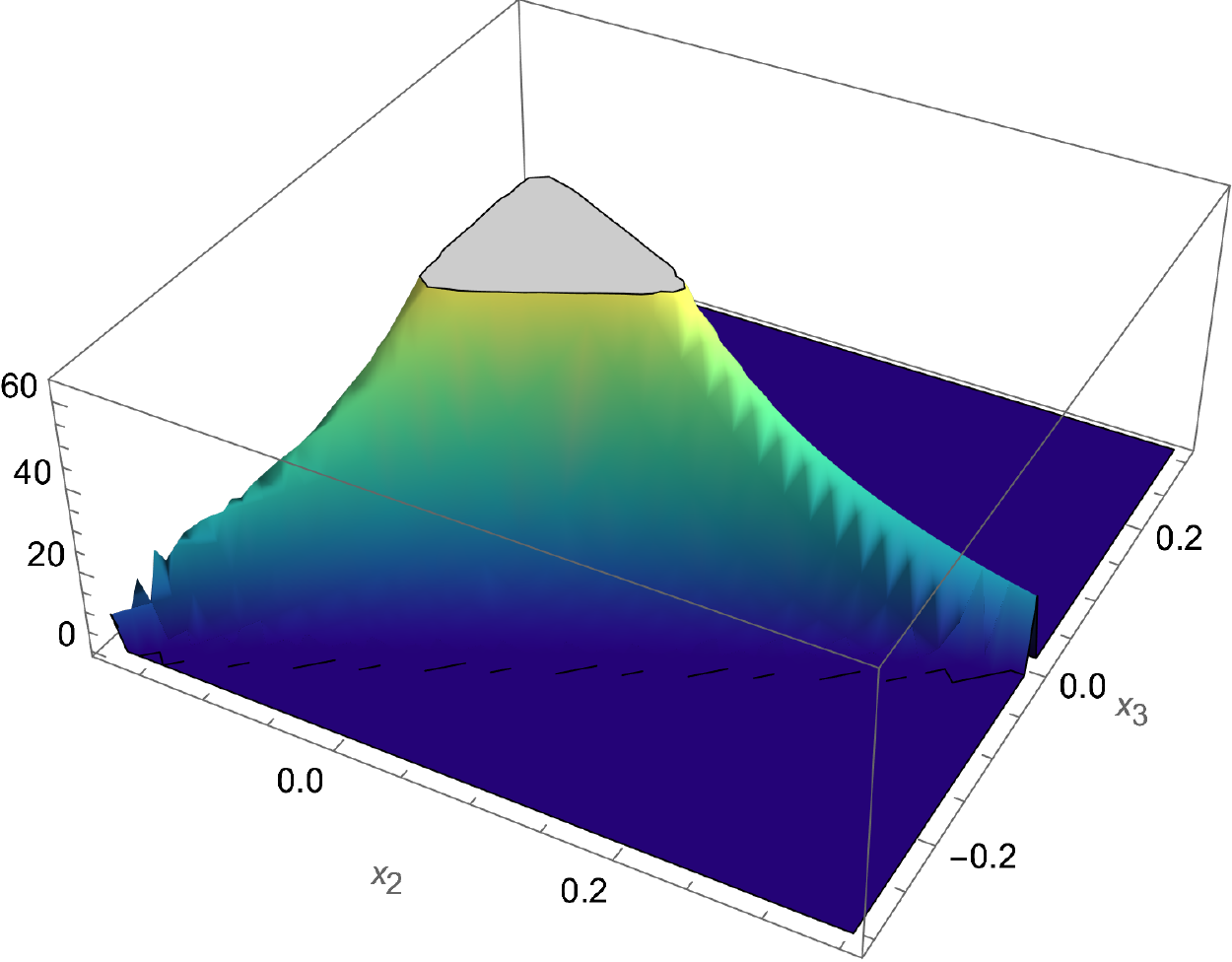}
	\includegraphics[scale=0.45]{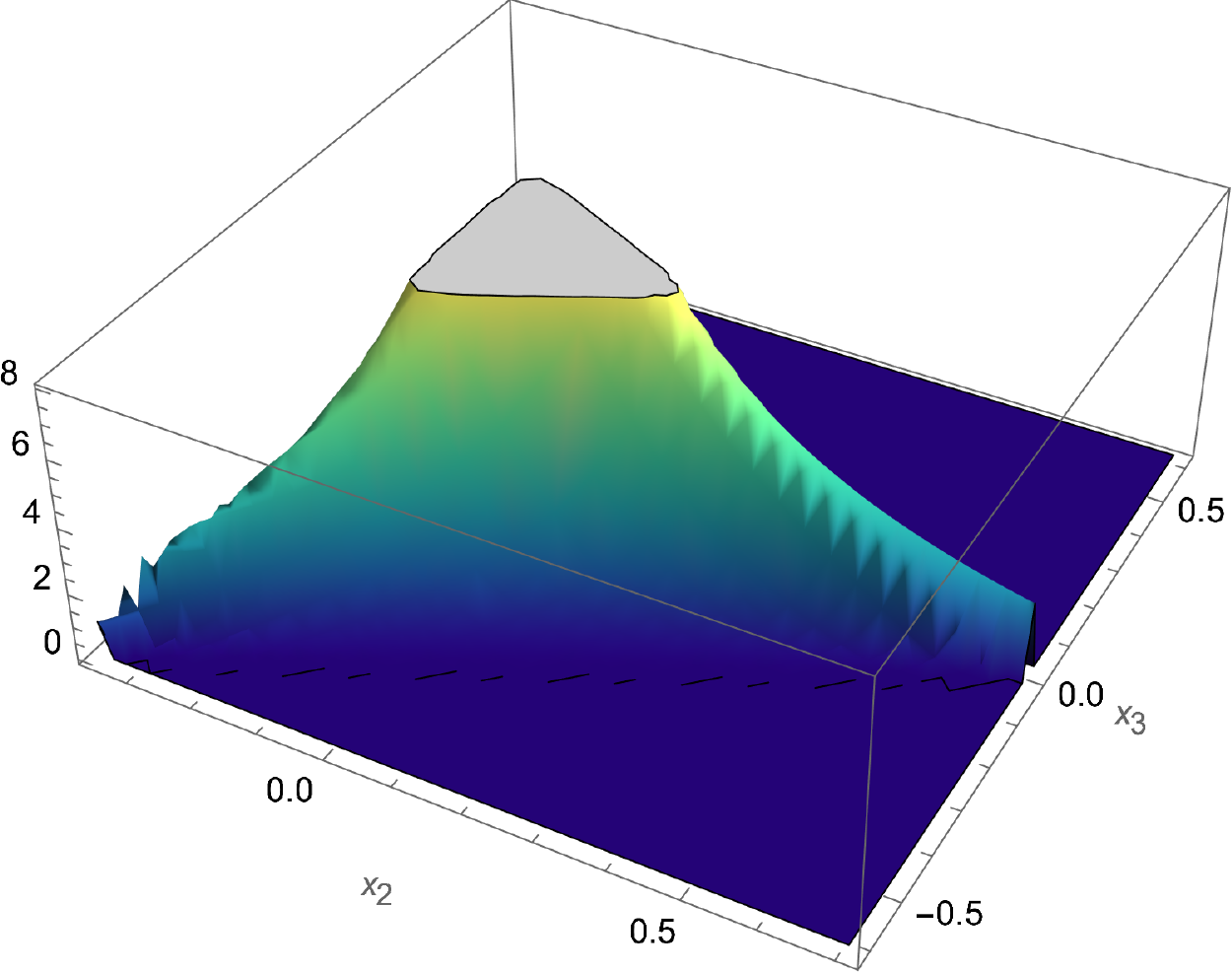}
	\caption{Plot of $\xi(\boldsymbol{x},t)$ for $c=t=1$ and $x_1=1/2$ on the left-side and $c=1$, $t=2$ and $x_1=1$ on the rigth-side.}
	\label{fig5}
\end{figure} 
\begin{remark}
	Under the assumptions of Theorem \ref{theor4}, we note that, for $t >0$ and $1 \leq j \leq 4$, the limit of $p_1(\boldsymbol{x},t)$ for $(x_1,x_2,x_3) \to (ct \cos \theta_j \sin\varphi_{j}, ct\sin\theta_{j} \sin\varphi_{j}, ct \cos \varphi_{j})$ can be computed in a closed form that we omit due to its complexity.
\end{remark}
Similarly to the classical telegraph process driven by the Poisson process (see, e.g., Lemma 2 of \cite{orsingher1990probability}), it is not hard to see that the process $\boldsymbol{X}(t)$ does not admit a stationary state.
In the next corollary making use of Eq.\,(\ref{eq9}) we analyse the asymptotic behaviour of density $p_1(\boldsymbol{x},t)$ when the time $t$ tends to $+\infty$. 
%
\begin{corollary}\label{corol2}
	Under the assumptions of Theorem \ref{theor4}, for $t >0$ and $\boldsymbol{x} \in {\rm Int}(\mathcal{T}(t))$ one has
	\begin{equation*}
		\lim_{t \to + \infty} p_{1}(\boldsymbol{x},t)= 0.
	\end{equation*}
\end{corollary}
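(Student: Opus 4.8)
The plan is to work directly from the closed-form expressions in Theorem~\ref{theor4}, since $p_1(\boldsymbol{x},t)=\sum_{j=1}^4 p_{1j}(\boldsymbol{x},t)$ by Eq.~(\ref{eq9}), and to reduce the statement to an elementary comparison of polynomial degrees in $t$. First I fix $\boldsymbol{x}\in\mathbb{R}^3$ and the intensities $\lambda_1,\dots,\lambda_4$. Because $\mathcal{T}(t)\to\mathbb{R}^3$ as $t\to\infty$, there is $t_0>0$ with $\boldsymbol{x}\in{\rm Int}(\mathcal{T}(t))$ for every $t>t_0$, so the four dwelling times $\tau_j=\tau_j(\boldsymbol{x},t)$ are strictly positive on this range and Eqs.~(\ref{eq53}) apply. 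The key observation comes from the explicit form (\ref{eq55}): for fixed $\boldsymbol{x}$ each $\tau_j$ is affine in $t$ with leading term $t/4$, hence $\tau_j=\tfrac{t}{4}+O(1)$ and $\tau_j\to+\infty$ as $t\to\infty$.

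Consequently, with $\boldsymbol{x}$ and the $\lambda_i$ held fixed, each of the symmetric functions $A(\boldsymbol{\tau}),B(\boldsymbol{\tau}),C(\boldsymbol{\tau}),D(\boldsymbol{\tau})$ is a polynomial in $t$ of degree $1,2,3,4$ respectively, with positive leading coefficient, and every $p_{1j}(\boldsymbol{x},t)$ becomes a rational function of $t$. The next step is the degree count. The common denominator $det\boldsymbol{A}\,[1+A(\boldsymbol{\tau})+B(\boldsymbol{\tau})+C(\boldsymbol{\tau})]^4$ has leading term of order $C(\boldsymbol{\tau})^4$, i.e.\ degree $12$ in $t$. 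For the numerators one checks, term by term, that each has degree $9$: in $p_{11}$ and $p_{14}$ the bracket $[1+A+B+C]^2+6D[1+A+B+C+D]$ is dominated by $6D(\boldsymbol{\tau})^2$ (degree $8$), multiplied by a degree-$1$ prefactor; in $p_{12}$ the prefactor $\tau_1\tau_2$ (degree $2$), the factor $(1+\lambda_2\tau_2)(1+\lambda_3\tau_3)(1+\lambda_4\tau_4)$ (degree $3$) and the bracket dominated by $3D(\boldsymbol{\tau})$ (degree $4$) combine to degree $9$; and in $p_{13}$ the factors $\tau_1\tau_2\tau_3$ (degree $3$), $(1+\lambda_3\tau_3)(1+\lambda_4\tau_4)$ (degree $2$) and the bracket of degree $4$ again give $9$.

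Since for each $j$ the numerator of $p_{1j}(\boldsymbol{x},t)$ has degree strictly smaller than that of the denominator, we obtain $p_{1j}(\boldsymbol{x},t)=O(t^{-3})$ as $t\to\infty$; summing the four contributions through Eq.~(\ref{eq9}) yields $p_1(\boldsymbol{x},t)\to 0$, as claimed. The only point requiring care---and the step I would treat as the main obstacle---is the bookkeeping that identifies the correct dominant monomial inside each bracket of (\ref{eq53}) (notably that the cross term $6D[1+A+B+C+D]$ outranks $[1+A+B+C]^2$ in $p_{11}$ and $p_{14}$), so that no numerator accidentally reaches degree $12$; once the leading orders are pinned down, the conclusion is immediate and, in fact, one could extract the exact $t^{-3}$ asymptotic constant if desired.
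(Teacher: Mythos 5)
Your proof is correct and follows essentially the same route as the paper, which simply observes via Eq.\,(\ref{eq9}) that each density $p_{1j}(\boldsymbol{x},t)$ in (\ref{eq53}) behaves as $t^{-3}$ as $t\to\infty$. Your degree count (numerators of degree $9$ in $t$, denominator of degree $12$, using $\tau_j=\tfrac{t}{4}+O(1)$ from (\ref{eq55})) is exactly the bookkeeping the paper leaves implicit, so you have in fact supplied the details behind the paper's one-line argument.
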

\begin{proof}
	Recalling Eq.\,(\ref{eq9}), the stated result follows by noting that each density $p_{1,j}(\boldsymbol{x},t)$ in (\ref{eq53}), for $1 \leq j \leq 4$, behaves as $t^{-3}$ as $t$ tends to infinity. 
	%
\end{proof}
%

The results expressed in this section for $V(0)=\vec{v}_1$ can be extended to the cases $V(0)=\vec{v}_j$, for $2 \leq j\leq 4$, by using a similar strategy.

\section{Concluding remarks}\label{sec7}

In this paper we analyzed a finite random motion in $\mathbb{R}^3$ where the sojourn times along each direction form four independent GCPs, and where the possible directions alternate cyclically. This work has been inspired by  \cite{di2023some} with the aim to de?ne similar processes in higher dimensions with possibly variable velocities.
Potential applications in biomathematics, engineering, financial and actuarial sciences allow to investigate possible future developments  also oriented  to the study of the first-passage-time problem.

Here, in order to illustrate the basic issues of this problem, we introduce the (upward) first-passage time for the first component of  the process $\{(\boldsymbol{X}(t),V(t)), t \geq 0\}$ through a constant barrier, say $\beta > 0$, conditional on  $C_1$ (cf.\ Eq.\,(\ref{eq6})) given by
\begin{equation}
	\tau_{\beta} = \inf\{t \geq 0: X_1(t) \geq \beta\}, \qquad {\bf X}(0)={\bf 0}, \quad V(0)=\vec v_1, \quad \vert{\vec{v_1}}\vert= c,
	\label{eq58}
\end{equation}
for $c>0$. 
The probability distribution of (\ref{eq58}) can be expressed in terms of the sub-density functions:
\begin{equation*}
	g_\beta(t,k):= \frac{\mathbb{P}_1\{\tau_{\beta} \in dt, N(t)=k\}}{dt}, \quad k\in \mathbb{N}.
\end{equation*}
In particular, by the law of total probability, we express the conditional distribution of $\tau_\beta$ in the form
\begin{equation}
	\mathbb{P}_1\{\tau_{\beta} \in dt\}= \mathbb{P}\{D_{1,1}>t\}\delta_{\frac{\beta}{c}}(dt)+\sum_{k=1}^{+\infty} \mathbb{P}_1\{\tau_{\beta} \in dt, N(t)=k\},
	\label{eq60}
\end{equation}
where $\delta_{\frac{\beta}{c}}$ is the Dirac delta measure at $\frac{\beta}{c}$, and $N(t)$ is the alternating counting process introduced in Eq.\ (\ref{eq2}).  The first term on the right-hand-side of (\ref{eq60}) corresponds to the motion without any direction switching up to time $t$. The series on the right-hand side of (\ref{eq60}) represents the absolutely continuous component of the first-passage-time distribution, which arises when at least one direction reversal occurs. We also recall that $D_{1,1}$, defined in Eq.\,(\ref{eq1}), is the random duration of the first time interval during which the particle proceeds with velocity $\Vec{v}_1$. 
In order to describe the first-passage-time problem, we consider as threshold the plane $x_1=\beta$ and  project the vectors $\vec{v}_j$, for $1 \leq j \leq 4$,  onto the $x_1$-axes according to the following relation
\begin{equation*}
	\vec{v}_{j_{x_1}}=proj_{\vec{w}_1}(\vec{v}_j)=\frac{\vec{w}_1 \cdot \vec{v}_j}{\vert \vert \vec{w}_1\vert \vert ^2}\cdot \vec{w}_1,
	\qquad 1 \leq j \leq 4.
\end{equation*}
The latter is the projection of the vector $\vec{v}_j$ on     $\vec{w}_1$, which is the versor along the $x_1$-axes.  An illustration of the problem is shown in Figure \ref{fig6}. 
\begin{figure}[ht]
	\centering
	\includegraphics[scale=0.4]{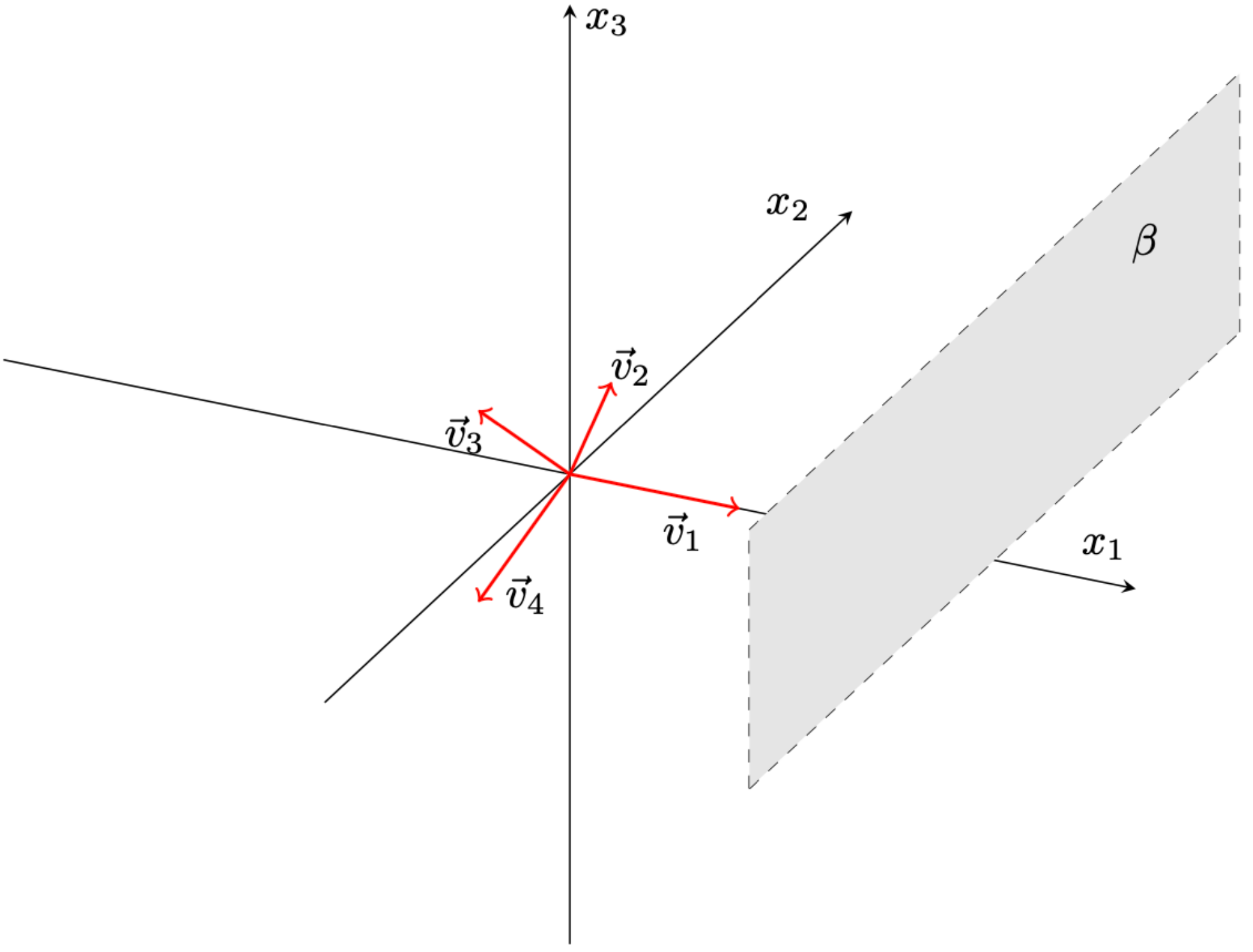}
	\caption{Illustration the constant barrier $\beta > 0$ for the first-passage time problem introduced in (\ref{eq58}).}
	\label{fig6}
\end{figure} 
Clearly, under Assumption \ref{assum1} of Section \ref{sec6}, we have:
\begin{equation*}
	\begin{aligned}
		\vec{v}_{1_{x_1}}&=c(1,0,0), \quad \vec{v}_{2_{x_1}}=c\big(-\frac{1}{3},0,0\big),\\
		\vec{v}_{3_{x_1}}&=c\big(-\frac{1}{3},0,0\big), \quad \vec{v}_{4_{x_1}}=c\big(-\frac{1}{3},0,0\big).
	\end{aligned}
\end{equation*}
\par
Due to the complexity of the problem, we discuss only the first cycle of particle motion concerning $D_{j,1}$ ($1 \leq j \leq 4$). 
We point out that the particle may reach the threshold $x_1=\beta$ only when it runs with velocity $\vec{v}_{1_{x_1}}$, since the other directions $\vec{v}_{2_{x_1}}$,  $\vec{v}_{3_{x_1}}$ and $\vec{v}_{4_{x_1}}$ refer to motion in the opposite direction. Moreover, with reference to the first term in the right-hand-side of (\ref{eq60}), one has 
\begin{equation*}
	\begin{aligned}
		\mathbb{P}(D_{1,1}>t) &=\mathbb{P}\bigg\{X_1(\tau_{\beta})=\beta, V(\tau_{\beta})=\vec{v}_{1}\bigg\}\\ 
		&=\mathbb{P}\bigg\{\tau_{\beta}=\frac{\beta}{c}, N(\tau_{\beta})=0\bigg\}\\
		&=\overline{F}_{D_{1,1}}\left(\frac{\beta}{c}\right)=
		\frac{1}{1+\lambda_1{\beta}/{c}}.
	\end{aligned}
\end{equation*}
Clearly, during the random periods $D_{2,1}, D_{3,1}, D_{4,1}$, when the particle moves with velocity $\vec{v}_{2_{x_1}}= \vec{v}_{3_{x_1}}=\vec{v}_{4_{x_1}}=-\frac{c}{3}$, it cannot reach the threshold $\beta$ since it moves in the opposite direction.
Therefore, the position occupied by the particle at the end of the first period of motion $D_1^{(1)}$ is given by
\begin{equation*}
	X_1(t)=cD_{1,1}-\frac{c}{3}\big(D_{2,1}+D_{3,1}+D_{4,1}\big).
\end{equation*}
It is worth mentioning that the determination of an explicit form for the terms in the series of  (\ref{eq60}) is in general very difficult even when $k$ is small and when the intensities $\lambda_i$ are equal. Hence, in view of possible future developments, in a forthcoming investigation we aim to apply computational methods to determine the related probabilities.
\par
In conclusion, we stress that the analysis of finite-velocity random motions in multidimensional domains deserves interest in various applied fields. In particular, the motion of a particle in a three-dimensional space as studied in this paper provides possible applications also in chemistry, since the tetrahedral geometry characterizes the shape of many molecules. Indeed, the motion of a particle is isotropic, i.e., the direction of its movement is uniformly distributed on the unit sphere in $\mathbb{R}^3$.
Thus, according to this interpretation, the Eq.\,(\ref{eq8}) can be employed to represent the wave function of the electron. 
\par
Further interesting real applications of finite-velocity random motions for modelling  random occurrences of events in time and space are related, for instance, to biology (for  the random motions of microorganisms), to geology (for  alternating trends in volcanic areas), and to physics (for the vorticity motion in two or more dimensions, see \cite{orsingher2008random}). Moreover, finite-velocity random motions in $\mathbb{R}^3$ are also useful to describe the movements of particles in gases, see, for instance, \cite{reimberg2013cmb}, where the authors introduce an application to the study of photon propagation in the Cosmis Microwave Background (CMB) radiation.  At least, using a similar approach, relevant real applications in higher dimensions concerning cyclic random motions under a GCP can be explored in future works even if the computation complexity in the resolution of the probability law is a very hard task.
Hence, in our view the finite-velocity random motion discussed here is an extension of the telegraph process to the space $\mathbb{R}^3$ and it is probably one of the possible ways for which the explicit distribution of the position $\boldsymbol{X}(t)$, under the assumption introduced in Section \ref{sec3}, can be determined.

\bmhead{Acknowledgments}
The authors are members of the group GNCS of INdAM (Istituto Nazionale di Alta Matematica). 
%
%
\section*{Declarations}

\noindent
{\bf Competing Interest} The authors have no other relevant financial or non-financial interests to disclose.

\bigskip
\noindent
{\bf Funding} This work is partially supported by INdAM-GNCS (project ``Modelli di shock basati sul processo di conteggio geometrico e applicazioni alla sopravvivenza'', CUP-E55F22000270001). This work is partially supported by  PRIN 2022 PNRR, project P2022XSF5H ``Stochastic Models in Biomathematics and Applications'' .

\bigskip
\noindent
{\bf Author Contribution statement}
AI and GV contributed equally to this work.

\bigskip
\noindent
{\bf Data Availability statement}
Not applicable.

\end{document}